\def\B{{\mathbb B}} \def\C{{\mathbb C}}  \def\D{{\mathbb D}}   
   \def\R{{\mathbb R}} \def\SS{{\mathbb S}}  \def\T{{\mathbb T}}
     \def\cp{{\mathcal P}}
\newcommand{\al}{\alpha}   \newcommand{\de}{\delta} 
  \newcommand{\ve}{\varepsilon}  \newcommand{\ze}{\zeta}
\def\t{\theta}      \newcommand{\la}{\lambda}
  \newcommand{\vp}{\varphi}   \newcommand{\om}{\omega}  \newcommand{\Om}{\Omega}
\newtheorem{lem}{\sc Lemma}
\newtheorem{thm}{\sc Theorem}
\newtheorem{prob}{\sc Problem}  
\newtheorem{cor}{\sc Corollary}            
\newtheorem{thmother}{\sc Theorem} 
\newtheorem{lemother}[thmother]{\sc Lemma}
\theoremstyle{remark}  
\newtheorem{example}{Example} % [remark]
\newtheorem*{example*}{Example}
\begin{document}
\title{Schwarzian derivatives for pluriharmonic mappings}

\author[I. Efraimidis]{Iason Efraimidis}
\address{Department of Mathematics and Statistics, Texas Tech University, Box 41042, Lubbock, TX 79409, United States} 
\email{iason.efraimidis@ttu.edu}  

 \author[A. Ferrada-Salas]{\'Alvaro Ferrada-Salas}
 \address{Facultad de Matem\'aticas, Pontificia Universidad Cat\'olica de Chile, Santiago.} \email{alferrada@mat.uc.cl}

 \author[R. Hern\'andez]{Rodrigo Hern\'andez}
 \address{Facultad de Ingenier\'{\i}a y Ciencias, Universidad Adolfo Ib\'a\~nez, Av. Padre Hurtado 750, Vi\~na del Mar, Chile.} \email{rodrigo.hernandez@uai.cl}

 \author[R. Vargas]{Rodrigo Vargas}
 \address{Facultad de Matem\'aticas, Pontificia Universidad Cat\'olica de Chile, Santiago.} \email{rsvargas@mat.uc.cl}

\subjclass[2010]{30C99, 30G30, 31C10, 32A30, 32U05}

\keywords{Pluriharmonic mapping, pre-Schwarzian derivative, Schwarzian derivative}

%%%%%%%%%%%%%%%%%%%%%%%%%%%%%%%%%%
%%%%%%%%%%%%%  Abstract  %%%%%%%%%%%%%%%%
%%%%%%%%%%%%%%%%%%%%%%%%%%%%%%%%%%

\begin{abstract} 
A pre-Schwarzian and a Schwarzian derivative for locally univalent pluriharmonic mappings in $\C^n$ are introduced. Basic properties such as the chain rule, multiplicative invariance and affine invariance are proved for these operators.  It is shown that the pre-Schwarzian is stable only with respect to rotations of the identity. A characterization is given for the case when the pre-Schwarzian derivative is holomorphic. Furthermore, it is shown that if the Schwarzian derivative of a pluriharmonic mapping vanishes then the analytic part of this mapping is a M\"obius transformation. 

Some observations are made related to the dilatation of pluriharmonic mappings and to the dilatation of their affine transformations, revealing differences between the theories in the plane and in higher dimensions. An example is given that rules out the possibility for a shear construction theorem to hold in $\C^n$, for $n\geq2$.
\end{abstract}

\maketitle

%%%%%%%%%%%%%%%%%%%%%%%%%%%%%%%%%%
%%%%%%%%%%%%% Introduction %%%%%%%%%%%%%%%
%%%%%%%%%%%%%%%%%%%%%%%%%%%%%%%%%%
\section{Preliminaries} 

\subsection{Introduction} The pre-Schwarzian and Schwarzian derivatives of a locally univalent holomorphic function $f$ in the plane are defined by 
$$
Pf \, = \, (\log f')' \, = \, \frac{f''}{f'} \qquad \text{and} \qquad Sf \, = \, (Pf)' - \tfrac{1}{2}(Pf)^2,  
$$
respectively. These operators appear frequently in geometric function theory and Teichm\"uller theory, where they are primarily used to prove criteria for univalence as well as criteria for quasiconformal and homeomorphic extension. They satisfy the invariance relations
$$
P(A\circ f) \, = \, Pf \qquad \text{and} \qquad S(T\circ f) \, = \, Sf 
$$
for linear (affine) maps $A(z)=az+b$, with $a\neq0$, and for M\"obius (linear fractional) transformations
$$
T(z) \, = \, \frac{az+b}{cz+d}, \qquad ad-bc\neq 0. 
$$ 
More generally, they satisfy the chain rules
$$
P(g \circ f) \, = \, (Pg \circ f) f' + Pf  \qquad \text{and} \qquad S(g\circ f) \, = \, (Sg \circ f) (f')^2 + Sf
$$
whenever the composition $g\circ f$ is well defined. We refer the reader to \cite{Leh} and \cite{Os98} for surveys of this classical theory. 

Both pre-Schwarzian and Schwarzian derivatives have been generalized and studied in the settings of harmonic mappings in the plane (see \cite{CDO03, HM15}) and of holomorphic mappings in $\C^n$ (see \cite{Pf74} and \cite{He06,MM96,Od74,OSt92}, for example). The main purpose of this article is to extend both operators to pluriharmonic mappings in $\C^n$. We introduce a pre-Schwarzian derivative that simultaneously generalizes the pre-Schwarzian derivatives of the two more restricted settings given by Mart\'in and the third author \cite{HM15} and Pfaltzgraff \cite{Pf74}. Moreover, we introduce a Schwarzian derivative that generalizes the Schwarzian operator for holomorphic mappings in $\C^n$ defined by the third author \cite{He06} and based on the work of Oda \cite{Od74}. 

We will be using the notations $Pf$ and $Sf$ when we know that the mapping $f$ is holomorphic either in one or in several complex variables and the notations $P_f$ and $S_f$, with the mapping $f$ as a subscript, in the more general settings of harmonic mappings in the plane or pluriharmonic mappings in $\C^n$. 

A pluriharmonic mapping $f$ in a simply connected domain $\Om\subset\C^n$ is a mapping $f=h+\overline{g}$, where $h$ and $g$ are holomorphic mappings in $\Om$ with values in $\C^n$. We assume throughout the article that $h$ is locally biholomorphic or, equivalently, that ${\rm det}Dh(z)\neq0$ for all $z\in\Om$, where $Dh$ is the $n\times n$ matrix $\big(\tfrac{\partial h_i}{\partial z_j}\big)_{i,j=1,\ldots n}$. Then the dilatation of $f$ is defined as $\om = Dg Dh^{-1}$. The real Jacobian of $f$ is given by 
\begin{equation*}
J_f(z) \, = \, {\rm det} \left(
\begin{array}{cc}
Dh(z)  & \overline{Dg(z)} \\
Dg(z)  & \overline{Dh(z)}
\end{array} \right), \qquad z\in\Om, 
\end{equation*}
or equivalently, by 
\begin{equation}\label{Jacob-pluri}
J_f(z) \, = \, |{\rm det}Dh(z)|^2 {\rm det} \, \big( I_n - \om(z) \overline{\om(z)} \,\big), \qquad z\in\Om, 
\end{equation}
where $I_n$ denotes the identity matrix of size $n$. Recent developments on pluriharmonic mappings include two-point distortion theorems \cite{DHK11,HHK13}, a Schwarz Lemma and Landau and Bloch theorems \cite{CG11,HK15}, as well as theorems on stable univalent mappings and univalence criteria through the use of linearly connected domains \cite{CHHK14}, among others. A sufficient condition for $f$ to be sense-preserving, \emph{i.e.} for $J_f>0$, was given in Theorem 5 of \cite{DHK11}. The norm notation hereafter refers to the standard operator norm (see \S~\ref{sub-sect-1.2}).

\begin{thmother}[\cite{DHK11}] \label{DHK-om-sense-preserv}
If $f=h+\overline{g}$ is a pluriharmonic mapping in $\Om$ for which $\|\om(z)\|<1$ for all $z\in\Om$ then $f$ is sense-preserving in $\Om$. 
\end{thmother}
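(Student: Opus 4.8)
The plan is to reduce the assertion, via the Jacobian identity \eqref{Jacob-pluri}, to a statement in elementary linear algebra. Fix $z\in\Om$ and write $M=\om(z)$; recall that this is an $n\times n$ complex matrix with $\|M\|<1$, and that $\overline M$ is its entrywise complex conjugate. Since $h$ is locally biholomorphic we have $|\det Dh(z)|^2>0$, so \eqref{Jacob-pluri} makes $J_f(z)>0$ equivalent to $\det(I_n-M\overline M)>0$. Hence the theorem reduces to the following claim: \emph{if $M$ is an $n\times n$ complex matrix with $\|M\|<1$, then $\det(I_n-M\overline M)>0$.}

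I would first observe that $\det(I_n-M\overline M)$ is real and nonzero. It is nonzero because $\|\overline M\|=\|M\|$ (conjugation is an isometry of $\C^n$ with its Hermitian norm and $v\mapsto\overline v$ permutes the unit sphere) and the operator norm is submultiplicative, so $\|M\overline M\|\le\|M\|\,\|\overline M\|=\|M\|^2<1$; thus the spectral radius of $M\overline M$ is $<1$, in particular $1$ is not an eigenvalue and $\det(I_n-M\overline M)\neq0$. It is real because, by Sylvester's determinant identity, $\det(I_n-M\overline M)=\det(I_n-\overline M M)$, and the latter equals $\overline{\det(I_n-M\overline M)}$.

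It remains to upgrade ``real and nonzero'' to ``positive,'' for which I would use a connectedness argument. Set $\varphi(t)=\det\big(I_n-t^2M\overline M\big)=\det\big(I_n-(tM)\overline{(tM)}\big)$ for $t\in[0,1]$. For each such $t$ one has $\|tM\|\le\|M\|<1$, so by the previous paragraph (applied to $tM$ in place of $M$) the number $\varphi(t)$ is real and nonzero; since $\varphi$ is also continuous, it has constant sign on $[0,1]$, and as $\varphi(0)=\det I_n=1>0$ we conclude $\varphi(1)=\det(I_n-M\overline M)>0$. (Alternatively: $M\overline M$ and $\overline M M$ have the same characteristic polynomial, which is therefore self-conjugate and hence real, so the eigenvalues $\lambda_1,\dots,\lambda_n$ of $M\overline M$ occur in conjugate pairs; in $\det(I_n-M\overline M)=\prod_i(1-\lambda_i)$ each nonreal pair contributes a factor $|1-\lambda|^2>0$ and each real eigenvalue, being $<1$, contributes a positive factor.) The argument is entirely soft: the only thing requiring care is keeping track of which matrix is being conjugated, since here $\overline{\om(z)}$ means the entrywise conjugate rather than the conjugate transpose.
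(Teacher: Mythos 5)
Your argument is correct and follows essentially the same route the paper attributes to \cite{DHK11}: reduce via \eqref{Jacob-pluri} to showing $\det\big(I_n-\om\overline{\om}\big)>0$, check that this determinant is real and nonzero whenever $\|\om\|<1$, and then deform along $t\mapsto t\om$ and use continuity to fix the sign, starting from the value $1$ at $t=0$. The paper only cites this theorem rather than proving it, but the identical homotopy-plus-continuity idea is spelled out in its proof of Theorem~\ref{factor}, so there is nothing to flag.
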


This was formulated in \cite{DHK11} for the case when $\Om$ is the unit ball in $\C^n$, but the proof does not depend on the domain of definition. In Section~\ref{sect-4} we make the simple observation that the converse of this fails, giving an example of a mapping $f$ for which $J_f>0$, but whose dilatation has arbitrarily large norm.  

Let $A$ be a complex matrix that satisfies $\|A\|<1$ and consider the affine mapping of the form $z\mapsto z + A \, \overline{z}$. Post-composing with a pluriharmonic mapping $f=h+\overline{g}$ we obtain a new pluriharmonic mapping 
$$
F  \, = \, f+A \,\overline{f}  \, = \,  H+\overline{G}
$$
with
$$
H  \, = \,  h+A \, g \qquad \text{and} \qquad G  \, = \, g + \overline{A} \, h.
$$
Assume now that $\|\om(z)\|<1$ for all $z\in\Om$. Then clearly the mapping $I_n+A\,\om(z)$ is invertible and we may compute the dilatation of $F$ as 
\begin{align} \label{om_F}
\om_F \, = &\, DG \, DH^{-1}  \nonumber \\
 = & \, \big( (\om + \overline{A}) Dh \big) \, \big( (I_n+A\,\om ) Dh \big)^{-1}  \nonumber  \\ 
 = &  \, (\om + \overline{A})(I_n+A\,\om )^{-1}. 
\end{align}
Observe that in the plane, the above takes the familiar form of a disk-automorphism and its modulus is therefore bounded by 1. It is then natural to ask if the same holds in dimension $n\geq 2$, that is, if $\|\om\|<1$ implies $\|\om_F\|<1$. However, the answer to this is negative and therefore Theorem~\ref{DHK-om-sense-preserv} can not be applied to $F$. Nevertheless, the assumption that $\|\om\| <1$ is sufficient for $F$ to be sense-preserving. We have that 
\begin{enumerate}[itemsep=5pt]
\item[(i)] $\|\om\|<1$ does not imply $\|\om_F\|<1$

\item[(ii)] $\|\om\|<1$ implies ${\rm det}( I_n - \om_F \overline{\om_F})> 0$

\item[(iii)] ${\rm det}( I_n - \om \overline{\om})> 0$ does not imply ${\rm det}( I_n - \om_F \overline{\om_F})> 0$. 
\end{enumerate}
Proposition (ii) is proved in Theorem~\ref{factor}, while counterexamples proving (i) and (iii) are given in Section~\ref{sect-4}, in Example~\ref{counter-omega} and Example~\ref{counter-det}, respectively. We perceive from this list that, in contrast to the planar case where we may apply an arbitrary number of affine transformations and each time obtain a sense-preserving harmonic mapping, it seems that in higher dimensions we are only allowed to apply one. It is therefore interesting to ask the following.
\begin{prob} \label{prob-1} 
Does there exist a condition on the dilatation $\om$ which implies \linebreak ${\rm det}( I_n - \om \overline{\om}) > 0$ and ${\rm det}(I_n+A\,\om)\neq0$ for every matrix $A$ with $\|A\|<1$ and, moreover, is preserved under affine transformations? 
\end{prob}

In Section~\ref{sect-2} we consider the class
\begin{equation}\label{class-loc-univ-pluri}
\cp(\Om) \, = \, \{ f:\Om\to\C^n \, | \, f\, \text{pluriharmonic}, J_h\neq0 \; \text{and}\; J_f\neq0 \; \text{throughout} \;\Om \} 
\end{equation}
and define the pre-Schwarzian derivative of a mapping $f=h+\overline{g}$ in $\cp(\Om)$ with dilatation $\om$ as the bilinear mapping 
\begin{equation} \label{def-pre-S-form}
P_f \langle \cdot,\cdot \rangle \, =\, Ph\langle \cdot,\cdot \rangle  - Dh^{-1} \big(I_n - \overline{\om}\om \big)^{-1} \, \overline{\om} \, D\om\langle \cdot,Dh \, \cdot \rangle. 
\end{equation}
We prove the chain rule for $P_f$ and show that $P_f$ is invariant under multiplication by an invertible matrix and under affine transformations. We then further justify our definition of $P_f$ via best affine approximation. We characterize the case when $P_f$ is holomorphic and deduce from it sufficient conditions for $f$ to be univalent. We show that $P_f$ is stable only with respect to rotations of the identity. We ask the following question for the unit ball $\B^n$ in $\C^n$.
\begin{prob}
Does there exist a constant $c\leq 1$ such that for every pluriharmonic mapping $f$ in $\B^n$ the condition 
$$
(1-|z|^2)\|P_f(z)\|\leq c, \qquad z\in \B^n,
$$
is sufficient for $f$ to be univalent? 
\end{prob}
For $f$ holomorphic Pfaltzgraff \cite{Pf74} answered this in the affirmative with $c=1$, which is sharp. 

In Section~\ref{sect-3} we define the Schwarzian derivative $S_f$ of a pluriharmonic mapping $f$ and deduce its basic properties from the corresponding properties of $P_f$. We prove that if $S_f$ vanishes then the holomorphic part of $f$ is a M\"obius transformation.

%%%%%%%%%%%%%%%%%%%%%%%%%%%%%%%%%%
\subsection{Preliminaries in several complex variables} \label{sub-sect-1.2} Let $\C^n$ be the space of points $z=(z_1,\ldots,z_n)$, where each $z_i\in\C$. It is endowed with the inner (dot) product $z\cdot w =\sum_{i=1}^n z_i \overline{w_i}$ and the norm $|z|= (z\cdot z)^{1/2}$. 

We denote by $\mathcal L^k(\C^n)$ the space of continuous $k$-linear operators from $\C^n$ into $\C^n$. For $T \in \mathcal L^k(\C^n)$ we write $T\langle\cdot,\ldots,\cdot\rangle$ to denote its placeholders. The standard operator norm in $\mathcal L^k(\C^n)$ is given by 
$$
\|T\| \, = \, \max_{u_1,\ldots,u_k \in\C^n\backslash\{0\}} | T\langle \tfrac{u_1}{|u_1|},\ldots,\tfrac{u_k}{|u_k|}\rangle |. 
$$
When $k=1$ we simply write $\mathcal L(\C^n)$ for the space of linear maps and also write $Tu$ instead of $T\langle u\rangle$ for any linear map $T$. 

Let $\Om$ be a domain in $\C^n$ and $f$ a mapping in $\Om$ with values in $\mathcal L(\C^n)$. If $f$ is $k$-times (Fr\'echet) differentiable with respect to $z\in\Om$ then its $k$-th derivative, denoted by $D^kf(z)$, is a symmetric mapping in $\mathcal L^{k+1}(\C^n)$, meaning that the value $D^kf(z)\langle u_1,\ldots,u_{k+1}\rangle$ remains unchanged after any permutation of the entries $u_1,\ldots,u_{k+1}$ (see Theorem 14.6 in \cite{Mu}). In the sequel we will be needing the \emph{product rule} for the derivative of the product $fg$ of two differentiable mappings $f, g: \Omega\to\mathcal L(\C^n)$, \emph{i.e.}, the composition of the linear mappings $f(z)$ and $g(z)$. Using the definition of differentiability one can easily prove that 
\begin{equation} \label{prod-rule}
D(fg)(z)\langle\cdot,\cdot\rangle  \, = \, Df(z)\langle\cdot,g(z) \,\cdot\rangle +f(z) \,Dg(z)\langle\cdot,\cdot\rangle, \quad z\in\Omega. 
\end{equation}
From this we can see that if $f(z)$ is invertible for every $z\in\Om$ then the derivative of $g(z)=f(z)^{-1}$  is given by
\begin{equation} \label{deriv-inverse}
Dg(z)\langle \cdot,\cdot\rangle\, = \, -g(z) Df(z)\langle \cdot,g(z)\,\cdot\rangle, \qquad z\in\Om.
\end{equation}
Finally, if $f: \Omega\to\mathcal L(\C^n)$ is holomorphic then Taylor's formula centered at some $\al\in\Om$ reads 
$$
f(z) \, = \, \sum_{k=0}^\infty \frac{1}{k!}D^kf(\al)(z-\al)^k, \qquad |z-\al|<\de(\al), 
$$
where $\delta(\al)$ denotes the distance from $\al$ to the boundary of $\Om$; see Theorem 7.13 in \cite{Mu}. Here the notation $D^kf(\al)w^k$ should be understood as $D^kf(\al)\langle w,\ldots,w,\cdot\rangle$, with the point $w$ repeated $k$ times and one placeholder left without being evaluated.

%%%%%%%%%%%%%%%%%%%%%%%%%%%%%%%%%%
\subsection{Pluriharmonic mappings} A function $u$ of class $C^2$ defined in a domain $\Om$ of $\C^n$ is called pluriharmonic if its restriction to every complex line is harmonic, that is, if for every fixed $z\in\Om$ and direction $\t\in\C^n, |\t|=1$, the function $u(z+\ze\t)$ is harmonic in $\{\ze\in\C  :  z+\ze\t\in\Om \}$. Equivalently, $u$ is pluriharmonic if 
$$
\frac{\partial^2 u}{\partial z_j \partial\overline{z_k}} \, = \, 0, \qquad \text{for all} \quad  j,k=1,2,\ldots n.  
$$
Interest in these functions stems from the fact that in a simply connected domain the class of real-valued pluriharmonic functions coincides with the class consisting of the real part of holomorphic functions. In contrast to the planar case, where every real-valued harmonic function is the real part of a holomorphic function, in dimension $n\geq2$ the pluriharmonic functions form a proper subclass of harmonic functions. See \cite{Kr} or \cite{Ru80} for basic facts about pluriharmonic functions. A \emph{pluriharmonic mapping} $f:\Om\to\C^n$ is a mapping all of whose coordinates are complex-valued pluriharmonic functions. 

If $\Om$ is simply connected then every pluriharmonic mapping $f:\Om\to\C^n$ can be written as $f=h+\overline{g}$, with $h$ and $g$ holomorphic in $\Om$. To see this we may assume that $f$ has values in $\C$ by considering its coordinates.  We write $f=u+iv$. Since $\Om$ is simply connected we can analytically complete the pluriharmonic functions $u$ and $v$ throughout $\Om$ (see Theorem 4.4.9 in \cite{Ru80} and the comments thereafter), that is, we can find holomorphic mappings $a$ and $b$ in $\Om$ for which $u={\rm Re}\,a$ and $v={\rm Re}\,b$. Setting $h=\frac{1}{2}(a+ib)$ and $g=\frac{1}{2}(a-ib)$ we easily verify that $f=h+\overline{g}$. This representation is unique up to an additive constant. We say that it is the \emph{canonical} representation of $f$ if $g(z_0)=0$ for some fixed point $z_0$ in $\Om$.

%%%%%%%%%%%%%%%%%%%%%%%%%%%%%%%%%%
\subsection{Affine transformations} Let $f=h+\overline{g}$ be a pluriharmonic mapping with dilatation $\om$ and let $A$ be a complex matrix that satisfies $\|A\|<1$. Then the affine transformation $F = f+A \,\overline{f}$ has dilatation $\om_F \, =  \, (\om + \overline{A})(I_n+A\,\om )^{-1}$, as seen in \eqref{om_F}. The following theorem shows that the assumption $\|\om\| <1$ is sufficient for $F$ to be sense-preserving. Its statement is purely about matrices though we maintain the notation of pluriharmonic mappings in order to keep it in context. 

\begin{thm} \label{factor}
If the complex matrices $A$ and $\om$ satisfy $\|A\|<1$ and $\|\om\| <1$ then the matrix
$$
\om_F \, =\, (\om + \overline{A}) (I_n +A\om)^{-1}
$$
satisfies 
$$
I_n - \om_F \overline{\om_F}  \, = \, (I_n - \overline{A} A ) (I_n +A\om)^{-1} ( I_n - \om \overline{\om} )  ( I_n +\overline{A} \overline{\om} )^{-1}
$$
and ${\rm det}( I_n - \om_F \overline{\om_F})>0$.  
\end{thm}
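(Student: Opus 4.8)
The plan is to verify the claimed matrix identity by direct algebraic manipulation and then read off positivity of the determinant as a consequence. First I would set $B = (I_n + A\om)^{-1}$, so that $\om_F = (\om + \overline{A})B$ and $\overline{\om_F} = \overline{B}\,(\overline{\om} + A)$ (using that conjugation is multiplicative on matrices and commutes with inversion). Then $\om_F\overline{\om_F} = (\om+\overline{A})B\overline{B}(\overline{\om}+A)$, and since $B\overline{B} = \big((I_n+A\om)(\overline{I_n+A\om})\big)^{-1} = \big((I_n+A\om)(I_n+\overline{A}\,\overline{\om})\big)^{-1}$, the whole expression becomes $(\om+\overline{A})\big[(I_n+A\om)(I_n+\overline{A}\,\overline{\om})\big]^{-1}(\overline{\om}+A)$. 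To match the claimed right-hand side I would instead compute $(I_n+A\om)\,(I_n - \om_F\overline{\om_F})\,(I_n+\overline{A}\,\overline{\om})$, which should equal $(I_n+A\om)(I_n+\overline{A}\,\overline{\om}) - (\om+\overline{A})(\overline{\om}+A)$ after the conjugate factors cancel against $B$ and $\overline{B}$; the bookkeeping here is the one point where one must be careful that all the inverses genuinely cancel on the correct side, since matrices do not commute.

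The core computation is then the purely formal expansion
\begin{align*}
(I_n+A\om)(I_n+\overline{A}\,\overline{\om}) - (\om+\overline{A})(\overline{\om}+A)
&= I_n + \overline{A}\,\overline{\om} + A\om + A\om\overline{A}\,\overline{\om} \\
&\quad - \om\overline{\om} - \om A - \overline{A}\,\overline{\om} - \overline{A} A,
\end{align*}
and I would want the cross terms $A\om$ and $\om A$ and the mixed terms $\overline{A}\,\overline{\om}$ to organize so that what remains is $(I_n - \overline{A}A) - \om\overline{\om} + (\text{terms})$ factoring as $(I_n-\overline{A}A)(I_n-\om\overline{\om})$ conjugated appropriately — but in fact the cleaner route is to prove directly that $(I_n+A\om)(I_n+\overline{A}\,\overline{\om}) - (\om+\overline{A})(\overline{\om}+A)$ equals $(I_n-\overline{A}A)(I_n+A\om)^{-1}\cdots$; to avoid circularity I would instead just expand both sides of the proposed identity $I_n - \om_F\overline{\om_F} = (I_n-\overline{A}A)(I_n+A\om)^{-1}(I_n-\om\overline{\om})(I_n+\overline{A}\,\overline{\om})^{-1}$ after multiplying through by $(I_n+A\om)$ on the left and $(I_n+\overline{A}\,\overline{\om})$ on the right, reducing everything to the single polynomial identity
$$
(I_n+A\om)(I_n+\overline{A}\,\overline{\om}) - (\om+\overline{A})(\overline{\om}+A) = (I_n-\overline{A}A)(I_n-\om\overline{\om}),
$$
which I would check by expanding both sides and cancelling the eight resulting terms in pairs. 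This last identity is the genuine content, and I expect it to be the main (though still routine) obstacle, precisely because one must track noncommuting products and confirm that, e.g., $A\om\overline{A}\,\overline{\om}$ on the left matches $-(-\overline{A}A)(-\om\overline{\om}) = \overline{A}A\om\overline{\om}$ on the right only after using that these particular terms do cancel against each other rather than needing commutativity — so I would double-check the sign and ordering of every term.

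For the determinant statement, I would take determinants of the established identity. Since $\|A\|<1$ and $\|\om\|<1$, the Hermitian matrices $I_n-\overline{A}A = I_n - A^*A$ and $I_n-\om\overline{\om}$... more precisely $I_n - \om\om^*$ after noting $\overline{\om} = \om^{*\,\top}$, are positive definite, so their determinants are positive; also $\det(I_n+A\om)$ and $\det(I_n+\overline{A}\,\overline{\om}) = \overline{\det(I_n+A\om)}$ are nonzero (indeed $\|A\om\|<1$ so $I_n+A\om$ is invertible), and their product is $|\det(I_n+A\om)|^2 > 0$. Therefore
$$
\det(I_n-\om_F\overline{\om_F}) = \frac{\det(I_n-\overline{A}A)\,\det(I_n-\om\overline{\om})}{|\det(I_n+A\om)|^2} > 0,
$$
which completes the proof. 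The only subtlety worth stating explicitly is the identification of $I_n-\overline{A}A$ and $I_n-\om\overline{\om}$ as (congruent to) positive definite Hermitian matrices under the operator-norm hypotheses, which follows from the standard fact that $\|M\|<1$ iff $I_n - M^*M \succ 0$ together with the relation between the entrywise conjugate $\overline{M}$ and the conjugate transpose $M^*$.
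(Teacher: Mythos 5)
Your reduction of the theorem to the ``polynomial identity''
$$
(I_n+A\om)(I_n+\overline{A}\,\overline{\om}) - (\om+\overline{A})(\overline{\om}+A) \, = \, (I_n-\overline{A}A)(I_n-\om\overline{\om})
$$
is where the argument breaks, and it breaks twice. First, the reduction itself is invalid: in $\om_F\overline{\om_F}=(\om+\overline{A})(I_n+A\om)^{-1}(\overline{\om}+A)(I_n+\overline{A}\,\overline{\om})^{-1}$ the factor $(I_n+A\om)^{-1}$ sits in the \emph{middle} of a noncommuting product, so left-multiplying the whole expression by $I_n+A\om$ yields $(I_n+A\om)(\om+\overline{A})(I_n+A\om)^{-1}(\overline{\om}+A)$, not $(\om+\overline{A})(\overline{\om}+A)$; likewise, on the claimed right-hand side the leading factor $I_n-\overline{A}A$ blocks the cancellation of $(I_n+A\om)^{-1}$. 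Second, the polynomial identity is false: expanding, the two sides differ by $A\om-\om A+A\om\overline{A}\,\overline{\om}-\overline{A}A\,\om\overline{\om}$, which is nonzero in general (take $n=2$, $A=aE_{12}$, $\om=wE_{21}$ with $a,w$ real; then $A\om-\om A=aw(E_{11}-E_{22})\neq0$ while $\overline{A}A=\om\overline{\om}=0$). The identity genuinely requires negotiating the noncommutativity: the working proof peels off only the right factor $I_n+\overline{A}\,\overline{\om}$, adds and subtracts $\overline{A}A$ so that $I_n-\overline{A}A$ factors out on the \emph{left}, and then uses $\om(I_n+A\om)^{-1}=(I_n+\om A)^{-1}\om$ to extract the middle inverse. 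Done correctly, the middle factor comes out as $(I_n+\om A)^{-1}$ rather than $(I_n+A\om)^{-1}$ (the displayed statement has a typo here; the two versions genuinely differ, as the same example shows), though this does not affect the determinant conclusion since $\det(I_n+\om A)=\det(I_n+A\om)$.

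The determinant step also rests on a false identification: $\overline{A}A$ is the product of the entrywise conjugate with $A$, not $A^*A$, and $\om\overline{\om}\neq\om\om^*$ in general (with $A=aE_{12}$ as above, $\overline{A}A=0$ while $A^*A=|a|^2E_{22}$). So $I_n-\overline{A}A$ and $I_n-\om\overline{\om}$ need not be Hermitian and positive definiteness is not available. Their determinants \emph{are} positive under the norm hypotheses, but this needs a different argument: for instance, $\mathrm{spec}(\overline{A}A)=\mathrm{spec}(A\overline{A})=\overline{\mathrm{spec}(\overline{A}A)}$, so the eigenvalues of $\overline{A}A$ come in conjugate pairs of modulus at most $\|A\|^2<1$ and the product of the factors $1-\lambda_i$ is positive; alternatively one can replace $(A,\om)$ by $(sA,t\om)$, observe that the factorization forces the determinant to be nonzero for all $s,t\in[0,1]$ and equal to $1$ at $s=t=0$, and conclude by continuity.
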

\begin{proof}
Let $B=I_n - \om_F \overline{\om_F}$ and compute 
\begin{align*}
C \, & = \, B ( I_n +\overline{A} \overline{\om} ) \\ 
& = \,   I_n +\overline{A} \overline{\om} - (\om + \overline{A}) (I_n +A\om)^{-1}(  \overline{\om} +A). 
\end{align*}
We add and subtract the term $\overline{A}A$ to get 
\begin{align*}
C \, & = \,  I_n - \overline{A}A + \big[\, \overline{A} - (\om + \overline{A}) (I_n +A\om)^{-1} \big] ( \overline{\om} +A)  \\ 
& = \,  I_n - \overline{A}A + \big[ \, \overline{A}(I_n +A\om) - (\om + \overline{A}) \big] (I_n +A\om)^{-1} ( \overline{\om} +A)  \\ 
& = \,  I_n - \overline{A}A - ( I_n - \overline{A}A ) \, \om \, (I_n +A\om)^{-1} ( \overline{\om} +A). 
\end{align*}
Setting 
\begin{align*}
D \, & = \, ( I_n - \overline{A}A )^{-1} C \\
& = \, I_n -  \om \, (I_n +A\om)^{-1} ( \overline{\om} +A)  
\end{align*}
and noting that 
$$
\om \, (I_n +A\om)^{-1} \, = \, (I_n + \om A )^{-1}\om 
$$
we compute 
\begin{align*}
D \, & =  \, I_n -  (I_n +\om A)^{-1}\om \, ( \overline{\om} +A)  \\
 & =  \, (I_n +\om A)^{-1}\big[ I_n +\om A- \om \, ( \overline{\om} +A)  \big]  \\
  & =  \, (I_n +\om A)^{-1} (I_n -\om  \overline{\om}). 
\end{align*}
We complete the factorization of $B$ by writing 
\begin{align*}
B \, & = \, C \, ( I_n +\overline{A} \overline{\om} )^{-1} \\
 & = \,  ( I_n - \overline{A}A ) \, D\, ( I_n +\overline{A} \overline{\om} )^{-1}\\
  & = \,  ( I_n - \overline{A}A ) (I_n +\om A)^{-1} (I_n -\om  \overline{\om}) ( I_n +\overline{A} \overline{\om} )^{-1}. 
\end{align*}

In order to show that ${\rm det}( I_n - \om_F \overline{\om_F})> 0$ we may replicate an idea from the proof of Theorem 5 in \cite{DHK11}: Note first that the assumptions $\|A\|<1$ and $\|\om\| <1$ and the above factorization readily imply that ${\rm det}( I_n - \om_F \overline{\om_F})\neq 0$. Consider parameters $s$ and $t$ in $[0,1]$ and see that all previous arguments apply to the matrices $sA$ and $t\om$ so that the corresponding determinant is also different from zero. Since it is equal to 1 for $s=t=0$, continuity with respect to $s$ and $t$ shows that it is positive for $s=t=1$. 
\end{proof}

\section{Pre-Schwarzian derivative} \label{sect-2}
%%%%%%%%%%%%%%%%%%%%%%%%%%%%%%%%%%
%%%%%%%%%%%%%%%%%%%%%%%%%%%%%%%%%%

%%%%%%%%%%%%%%%%%%%%%%%%%%%%%%%%%%
\subsection{Previous definitions of the pre-Schwarzian} For a locally biholomorphic mapping $f:\Om\to \C^n$, where $\Om$ is some domain in $\C^n$, we adopt the definition of the pre-Schwarzian derivative as a bilinear mapping given by 
\begin{equation}\label{Pfhol}
Pf(z)\langle\cdot,\cdot\rangle=Df(z)^{-1} D^2f(z)\langle\cdot,\cdot\rangle, \qquad z\in\Om. 
\end{equation}
This was introduced by Pfaltzgraff in \cite{Pf74}, who mostly considered the linear mapping $Pf(z)\langle z,\cdot\rangle$.

On the other hand, for a locally univalent harmonic mapping $f=h+\overline{g}$ on a planar domain $\Om$, with dilatation $\omega=g'/h':\Omega\to\mathbb{D}$, the definition 
\begin{equation} \label{Hern-Mart}
P_f(z) \, = \, (\log J_f(z))_z \, = \, Ph(z) - \frac{\overline{\om(z)}\om'(z)}{1-|\om(z)|^2}, \qquad z\in \Om,  
\end{equation}
was introduced in \cite{HM15} by Mart\'in and the third author. Here, in accordance with \eqref{Jacob-pluri}, the Jacobian is $J_f=(1-|\om|^2)|h'|^2$. Note that the anti-analytic term in the expression 
$$
\log J_f  \, = \,   \log (1-|\om|^2) +\log h' + \log\overline{h'} 
$$
plays no role in the differentiation and, hence, we may use the expression 
\begin{equation}\label{Uop}
U \, = \, (1-|\om|^2) h'
\end{equation}
and obtain the exact same pre-Schwarzian by setting $P_f=(\log U)_z$. Exploiting this observation we propose a definition of the pre-Schwarzian derivative for pluriharmonic mappings in several complex variables.  

%%%%%%%%%%%%%%%%%%%%%%%%%%%%%%%%%%
\subsection{Definition of $P_f$ for pluriharmonic mappings} Let $\Omega$ be a simply connected domain in $\C^n$ and $f\in\cp(\Om)$, the class of pluriharmonic mappings given in~\eqref{class-loc-univ-pluri}. We consider 
$$ %\begin{equation} \label{def-U}
U(z) \, = \, \big(I_n - \overline{\om(z)}\om(z) \big) Dh(z), \qquad z\in\Om,
$$ %\end{equation}
and define the \emph{pre-Schwarzian} derivative of $f$ as the mapping $P_f: \Omega\to\mathcal L^2(\C^n)$ given by
\begin{equation} \label{def-pre-S}
P_f(z)\langle \cdot,\cdot\rangle \, = \, U(z)^{-1} \, DU(z)\langle \cdot,\cdot\rangle, \qquad z\in \Om. 
\end{equation}
We apply the product rule \eqref{prod-rule} and, suppressing the variable $z\in\Om$, compute
$$
DU\langle \cdot,\cdot \rangle  \, = \, -\overline{\om} \,  D\om\langle \cdot,Dh \, \cdot \rangle +(I_n-\overline{\om}\om)D^2h\langle \cdot,\cdot \rangle. 
$$
Therefore, we obtain from \eqref{def-pre-S} that
$$
P_f \langle \cdot,\cdot \rangle \, =  \,  Dh^{-1}D^2h\langle \cdot,\cdot \rangle  - Dh^{-1} \big(I_n - \overline{\om}\om \big)^{-1} \overline{\om}D\om\langle \cdot,Dh \, \cdot \rangle,
$$
which is equivalent to the expression \eqref{def-pre-S-form}. This operator is evidently a generalization of both the pre-Schwarzian for holomorphic mappings in several complex variables \eqref{Pfhol} and the pre-Schwarzian for planar harmonic mappings \eqref{Hern-Mart}. Further justification for this definition will be provided in \S~\ref{sub-sect-2.4} with the process of best affine approximation.

%%%%%%%%%%%%%%%%%%%%%%%%%%%%%%%%%%
\subsection{Chain rule, multiplicative invariance and  affine invariance} We first establish the chain rule for the composition of a pluriharmonic mapping with a locally biholomorphic mapping. Hereafter we tacitly assume in all statements that the necessary assumptions on the mappings involved hold so that their pre-Schwarzian derivatives are well defined. 

\begin{thm}[Chain rule for $P_f$] \label{chain-rule-pre-Schw}
Let $f=h+\overline{g}$ be a pluriharmonic mapping and $\varphi$ be a locally biholomorphic mapping such that the composition $f\circ\varphi$ is well defined. Then 
$$
P_{f\circ\varphi}(z)\langle \cdot,\cdot \rangle \, = \, D\varphi(z)^{-1} P_f\big(\varphi(z)\big) \langle D\varphi(z)\, \cdot, D\varphi(z) \, \cdot \rangle +P\varphi(z) \langle\cdot,\cdot\rangle 
$$
for all $z$ in the domain of definition of $\varphi$. 
\end{thm}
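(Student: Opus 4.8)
The plan is to reduce the chain rule for $P_f$ to the known chain rules for the holomorphic pre-Schwarzian $Ph$ and for ordinary differentiation, by exploiting the defining identity $P_f = U^{-1}DU$ with $U = (I_n - \overline{\om}\,\om)Dh$. Writing $\psi = f\circ\varphi = h\circ\varphi + \overline{g\circ\varphi}$, its analytic part is $h\circ\varphi$ and its anti-analytic part is $g\circ\varphi$, so by the chain rule for ordinary derivatives $D(h\circ\varphi)(z) = Dh(\varphi(z))D\varphi(z)$ and likewise $D(g\circ\varphi)(z) = Dg(\varphi(z))D\varphi(z)$. Hence the dilatation of $\psi$ is
$$
\om_\psi(z) \, = \, D(g\circ\varphi)(z)\, D(h\circ\varphi)(z)^{-1} \, = \, Dg(\varphi(z))D\varphi(z)\,D\varphi(z)^{-1}Dh(\varphi(z))^{-1} \, = \, \om(\varphi(z)),
$$
the composition operator commuting through because $\varphi$ is holomorphic and its derivative cancels. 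This is the key simplification: the dilatation pulls back by mere composition, with no extra factors.

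Next I would compute $U_\psi$, the matrix built from $\psi$. Since $\om_\psi(z) = \om(\varphi(z))$, we get
$$
U_\psi(z) \, = \, \big(I_n - \overline{\om(\varphi(z))}\,\om(\varphi(z))\big)\, Dh(\varphi(z))\, D\varphi(z) \, = \, U(\varphi(z))\, D\varphi(z).
$$
So $U_\psi$ is just the composition $U\circ\varphi$ multiplied on the right by $D\varphi$. Now $P_\psi = U_\psi^{-1}DU_\psi$, and I would apply the product rule \eqref{prod-rule} to $U_\psi(z) = U(\varphi(z))D\varphi(z)$, treating it as a product of the two $\mathcal L(\C^n)$-valued maps $z\mapsto U(\varphi(z))$ and $z\mapsto D\varphi(z)$. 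The derivative of the first factor is $DU(\varphi(z))\langle D\varphi(z)\,\cdot,\,\cdot\rangle$ by the chain rule in several variables, and the derivative of the second is $D^2\varphi(z)$. Assembling, and multiplying on the left by $U_\psi^{-1} = D\varphi(z)^{-1}U(\varphi(z))^{-1}$, the first term yields $D\varphi(z)^{-1}U(\varphi(z))^{-1}DU(\varphi(z))\langle D\varphi(z)\,\cdot, D\varphi(z)\,\cdot\rangle = D\varphi(z)^{-1}P_f(\varphi(z))\langle D\varphi(z)\,\cdot, D\varphi(z)\,\cdot\rangle$, and the second term yields $D\varphi(z)^{-1}D^2\varphi(z)\langle\cdot,\cdot\rangle = P\varphi(z)\langle\cdot,\cdot\rangle$, which is exactly the claimed formula.

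The main obstacle, such as it is, is purely notational rather than conceptual: one must be careful about \emph{which} placeholder slot of the multilinear operators the various matrices act on, since the product rule \eqref{prod-rule} distributes the differentiation unevenly across the two slots, and the chain rule $D(U\circ\varphi)(z)\langle u,v\rangle = DU(\varphi(z))\langle D\varphi(z)u,\,v\rangle$ inserts $D\varphi(z)$ only into the slot being differentiated in. Keeping the bookkeeping of slots consistent — in particular verifying that both occurrences of $D\varphi(z)$ in $P_f(\varphi(z))\langle D\varphi(z)\,\cdot, D\varphi(z)\,\cdot\rangle$ emerge correctly, one from the chain-rule pullback of $DU$ and one from the right-multiplication by $D\varphi$ hidden inside $U(\varphi(z))^{-1}$ acting on $Dh$ — is the only place where care is needed. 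Everything else is a direct application of \eqref{prod-rule}, \eqref{def-pre-S}, and the elementary fact that the dilatation is composition-invariant.
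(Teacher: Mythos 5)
Your proof is correct, and it takes a mildly but genuinely different route from the paper's. The paper proves the theorem by substituting into the expanded formula \eqref{def-pre-S-form}: it writes $F=H+\overline{G}$ with $H=h\circ\varphi$, $G=g\circ\varphi$, notes $\om_F=\om\circ\varphi$, computes $PH$ from the holomorphic chain rule for $D^2H$, computes $D\om_F(z)\langle\cdot,\cdot\rangle=D\om(\varphi(z))\langle D\varphi(z)\,\cdot,\cdot\rangle$, and assembles the two pieces. You instead work with the compact definition \eqref{def-pre-S} and observe the single structural identity $U_{f\circ\varphi}=(U\circ\varphi)\,D\varphi$, which is the exact several-variables analogue of $(u\circ\varphi)'=(u'\circ\varphi)\varphi'$; one application of the product rule \eqref{prod-rule} then delivers both terms of the chain rule at once. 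This is shorter and makes transparent \emph{why} any operator of the form $U^{-1}DU$ satisfies a cocycle-type chain rule. The one point your write-up glosses over, and which the paper's route avoids by only ever differentiating the holomorphic objects $H$ and $\om_F$, is that $U$ is not holomorphic (it contains $\overline{\om}$), so the pullback identity $D(U\circ\varphi)(z)\langle u,\cdot\rangle = DU(\varphi(z))\langle D\varphi(z)u,\cdot\rangle$ needs the remark that the anti-holomorphic factor $\overline{\om\circ\varphi}$ is killed by $\partial/\partial z$ precisely because $\varphi$ is holomorphic; with that sentence added, your argument is complete.
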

%%%%%%%%%%%%%%%%
\begin{proof}
We set $F=f\circ\varphi$ and write $F=H+\overline{G}$, where $H=h\circ\varphi$ and $G=g\circ \varphi$. We also note that $\omega_F=\omega\circ\varphi$. In order to use formula \eqref{def-pre-S-form} for the mapping $F$ we compute
$$
DH(z) \, = \, Dh\big(\varphi(z)\big) \, D\varphi(z) 
$$
and
$$
D^2H(z)\langle \cdot,\cdot \rangle \, = \, D^2h\big(\varphi(z)\big)\langle D\varphi(z) \cdot \, , D\varphi(z) \cdot \rangle + Dh\big(\varphi(z)\big) D^2\varphi(z)\langle \cdot,\cdot \rangle. 
$$
Therefore
\begin{align*}
PH(z)\langle \cdot,\cdot \rangle \, = & \, DH(z)^{-1} \, D^2H(z)\langle \cdot,\cdot \rangle  \\
= & \, D\varphi(z)^{-1}Ph\big(\varphi(z)\big) \langle D\varphi(z)\,\cdot, D\varphi(z)\,\cdot \rangle +P{\varphi}(z)\langle \cdot,\cdot \rangle. 
\end{align*}
Finally, we compute
$$
D\omega_F(z)\langle \cdot,\cdot \rangle  \, = \, D\omega\big(\varphi(z)\big) \langle D\varphi(z)\cdot \, ,\cdot \rangle 
$$
and arrive at the desired conclusion after a substitution in \eqref{def-pre-S-form}. 
\end{proof}
%%%%%%%%%%%%%%%%

The following is a generalization of Lemma 1 in \cite{HM15}. It shows that the pre-Schwarzian of a pluriharmonic mapping is equal to the pre-Schwarzian of a specific holomorphic mapping which, however, depends on the point of evaluation. 
\begin{lem}\label{pre-S-lem}
Let $f=h+\overline{g}$ be a pluriharmonic mapping in $\Om$ having dilatation $\om$. Then  
$$
P_{f}(z_0) \, = \, P\big(h-\overline{\omega(z_0)}g\big)(z_0) 
$$
for any fixed $z_0$ in $\Om$. 
\end{lem}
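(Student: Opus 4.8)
The plan is to fix $z_0\in\Om$, set $\psi=h-\overline{\om(z_0)}\,g$, and compute $P\psi(z_0)$ directly from the holomorphic pre-Schwarzian formula \eqref{Pfhol}, then match it against \eqref{def-pre-S-form} evaluated at $z_0$. Since $\om(z_0)$ is a constant matrix, $\psi$ is a holomorphic mapping near $z_0$ (being a constant linear combination of the holomorphic mappings $h$ and $g$), so $P\psi$ makes sense provided $D\psi$ is invertible at $z_0$; this invertibility is exactly the statement that $I_n-\overline{\om(z_0)}\om(z_0)$ is invertible, which follows from $J_f(z_0)\neq0$ via \eqref{Jacob-pluri}, together with $J_h(z_0)\neq0$.

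First I would record the two derivatives at the generic point $z$ and then specialize: writing $W=\om(z_0)$ for the constant matrix,
$$
D\psi(z) \, = \, Dh(z) - \overline{W}\,Dg(z) \qquad\text{and}\qquad D^2\psi(z)\langle\cdot,\cdot\rangle \, = \, D^2h(z)\langle\cdot,\cdot\rangle - \overline{W}\,D^2g(z)\langle\cdot,\cdot\rangle.
$$
Next I would express $Dg$ and $D^2g$ in terms of $h$ and $\om$ using the definition $\om=Dg\,Dh^{-1}$, i.e. $Dg=\om\,Dh$, hence by the product rule \eqref{prod-rule},
$$
D^2g\langle\cdot,\cdot\rangle \, = \, D\om\langle\cdot,Dh\,\cdot\rangle + \om\,D^2h\langle\cdot,\cdot\rangle.
$$
Substituting and evaluating at $z=z_0$, where $\om(z_0)=W$, gives
$$
D\psi(z_0) \, = \, \big(I_n-\overline{W}W\big)Dh(z_0), \qquad D^2\psi(z_0)\langle\cdot,\cdot\rangle \, = \, -\overline{W}\,D\om(z_0)\langle\cdot,Dh(z_0)\,\cdot\rangle + \big(I_n-\overline{W}W\big)D^2h(z_0)\langle\cdot,\cdot\rangle.
$$
I recognize $D\psi(z_0)$ as precisely $U(z_0)$ from the definition \eqref{def-pre-S}, and $D^2\psi(z_0)$ as precisely $DU(z_0)$ computed in the line following \eqref{def-pre-S} (this is where the constancy of $W$ is used — the term $-\overline{\om}D\om\langle\cdot,Dh\cdot\rangle$ in $DU$ there is exactly what appears here, since differentiating $\overline{\om(z_0)}$ contributes nothing). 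Therefore
$$
P\psi(z_0)\langle\cdot,\cdot\rangle \, = \, D\psi(z_0)^{-1}D^2\psi(z_0)\langle\cdot,\cdot\rangle \, = \, U(z_0)^{-1}DU(z_0)\langle\cdot,\cdot\rangle \, = \, P_f(z_0)\langle\cdot,\cdot\rangle,
$$
which is the claim.

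The computation is entirely routine; the only point requiring a little care — and the place I would expect a careful reader to pause — is the bookkeeping that $D\psi(z_0)$ and $D^2\psi(z_0)$ reproduce $U(z_0)$ and $DU(z_0)$ \emph{on the nose}, including the antiholomorphic term $\overline{\om}D\om$. This works precisely because $\psi$ freezes the dilatation at its value $\om(z_0)$, so that the $z$-derivative of $\psi$ sees the variation of $\om$ only through the single surviving term, matching the mixed term in $DU$; the matching would fail if one used $h-\overline{\om(z)}\,g$ instead of $h-\overline{\om(z_0)}\,g$. I would state this explicitly to make the role of the base point $z_0$ transparent.
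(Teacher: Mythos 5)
Your proposal is correct and follows essentially the same route as the paper: the paper computes $P(h+Ag)$ for a general constant matrix $A$ via $Dg=\om\,Dh$ and the product rule, then specializes $A=-\overline{\om(z_0)}$ and evaluates at $z_0$, which is exactly your computation with $W=\om(z_0)$ fixed from the outset. Your additional observation that $D\psi(z_0)$ and $D^2\psi(z_0)$ coincide with $U(z_0)$ and $DU(z_0)$ is a harmless repackaging of the same identity, and your remark on the invertibility of $I_n-\overline{W}W$ is a correct (if tacit in the paper) point of care.
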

%%%%%%%%%%%%%%%%
\begin{proof}
Let $F=h+A\,g$, where $A$ is a matrix with $\|A\|<1$. We compute
$$
DF \, = \, (I_n+A\,\om ) Dh 
$$
and use the product rule \eqref{prod-rule} in order to find that
$$
D^2F \langle \cdot \,,\cdot \rangle \, = \, A\,D\om\langle \cdot \,, Dh \cdot \rangle+ (I_n+A\,\om ) D^2h\langle \cdot \,,\cdot \rangle .
$$
Therefore, in view of definition \eqref{Pfhol} we have that
$$
PF \langle \cdot \,,\cdot \rangle \, = \, Ph \langle \cdot \,,\cdot \rangle +Dh^{-1} (I_n+A\,\om )^{-1}A\,D\om\langle \cdot \,,Dh\cdot \rangle. 
$$ 
We now get the desired result by taking $A=-\overline{\omega(z_0)}$ and evaluating at $z_0$. 
\end{proof}
%%%%%%%%%%%%%%%%

Note that the pre-Schwarzian \eqref{Pfhol} of a holomorphic mapping $f$ remains unchanged if $f$ is multiplied by an invertible $n\times n$ complex matrix $A$. We readily see this by writing $F=Af$ and computing 
\begin{equation} \label{holo-mult-inv}
PF \, = \,  DF^{-1} D^2F \, = \, (A\,Df)^{-1} A\, D^2f \,  = \, (Df)^{-1} D^2f\, = \, Pf. 
\end{equation}
We now generalize this to pluriharmonic mappings. 

%%%%%%%%%%%%%%%%
\begin{thm}[Multiplicative invariance for $P_f$] \label{harm-mult-inv}
If $f$ is a pluriharmonic mapping and $A$ is an invertible matrix then 
$$
P_{Af} = \, P_f.
$$
\end{thm}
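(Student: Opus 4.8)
The plan is to apply the $A$-multiplied pluriharmonic mapping $Af = Ah + \overline{\overline{A}\,g}$ directly to the formula \eqref{def-pre-S-form}, tracking how each constituent object transforms. Write $F = Af$, so that $H = Ah$ and $G = \overline{A}\,g$ are the holomorphic and antiholomorphic parts of $F$. First I would compute $DH = A\,Dh$ (since $A$ is a constant matrix), and therefore $D^2H\langle\cdot,\cdot\rangle = A\,D^2h\langle\cdot,\cdot\rangle$; likewise $DG = \overline{A}\,Dg$. Consequently the dilatation of $F$ is
$$
\om_F \, = \, DG\,DH^{-1} \, = \, \overline{A}\,Dg\,(A\,Dh)^{-1} \, = \, \overline{A}\,\om\,A^{-1},
$$
and its derivative is $D\om_F\langle\cdot,\cdot\rangle = \overline{A}\,D\om\langle\cdot,A^{-1}\cdot\rangle$.

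Next I would substitute these into \eqref{def-pre-S-form} for $F$. The first term is $PH = P(Ah) = Ph$ by the holomorphic multiplicative invariance \eqref{holo-mult-inv}. For the second term, the factor $Dh^{-1}$ becomes $DH^{-1} = Dh^{-1}A^{-1}$; the matrix $\overline{\om_F} = \overline{\,\overline{A}\,\om\,A^{-1}} = A\,\overline{\om}\,\overline{A}^{-1}$; the product $\overline{\om_F}\,\om_F = A\,\overline{\om}\,\overline{A}^{-1}\overline{A}\,\om\,A^{-1} = A\,\overline{\om}\,\om\,A^{-1}$, so that $(I_n - \overline{\om_F}\,\om_F)^{-1} = A\,(I_n - \overline{\om}\,\om)^{-1}A^{-1}$. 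Assembling the second term of $P_F$ and using $DH = A\,Dh$ in the inner placeholder $D\om_F\langle\cdot,DH\,\cdot\rangle = \overline{A}\,D\om\langle\cdot,A^{-1}A\,Dh\,\cdot\rangle = \overline{A}\,D\om\langle\cdot,Dh\,\cdot\rangle$, all the $A$'s and $\overline{A}$'s telescope:
$$
Dh^{-1}A^{-1}\cdot A\,(I_n-\overline{\om}\,\om)^{-1}A^{-1}\cdot A\,\overline{\om}\,\overline{A}^{-1}\cdot\overline{A}\,D\om\langle\cdot,Dh\,\cdot\rangle \, = \, Dh^{-1}(I_n-\overline{\om}\,\om)^{-1}\overline{\om}\,D\om\langle\cdot,Dh\,\cdot\rangle,
$$
which is exactly the second term of $P_f$. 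Hence $P_{Af} = P_f$.

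The calculation is entirely mechanical once the transformation rules $DH = A\,Dh$, $\om_F = \overline{A}\,\om\,A^{-1}$, and $\overline{\om_F} = A\,\overline{\om}\,\overline{A}^{-1}$ are in hand; the only point requiring a moment's care is the cancellation pattern in the second term, where one must be careful that $\overline{A}^{-1}\overline{A} = I_n$ sits between the $(I_n-\overline{\om}\,\om)^{-1}$ factor and the $\overline{\om}$ factor and that the $A^{-1}A$ produced by combining $DH^{-1}$'s trailing $A^{-1}$ with $DH$'s leading $A$ inside the bilinear placeholder collapses correctly. I would present the computation by first stating the three transformation identities as a short display, then substituting into \eqref{def-pre-S-form} and simplifying in one or two lines. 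Alternatively — and perhaps more cleanly — one could observe that $U_F(z) = (I_n - \overline{\om_F}\,\om_F)\,DH = A\,(I_n - \overline{\om}\,\om)\,Dh = A\,U(z)$, so that $P_{Af} = U_F^{-1}DU_F = (A\,U)^{-1}A\,DU = U^{-1}DU = P_f$, which is the direct analogue of \eqref{holo-mult-inv} at the level of the auxiliary mapping $U$ and the argument I would actually write down.
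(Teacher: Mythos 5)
Your proof is correct, and both of your variants (the term-by-term substitution into \eqref{def-pre-S-form} and the cleaner observation that $U_F = AU$) check out: the transformation laws $DH = A\,Dh$, $\om_F = \overline{A}\,\om\,A^{-1}$, $\overline{\om_F} = A\,\overline{\om}\,(\overline{A})^{-1}$ are right, and the telescoping of the conjugating factors in the second term of \eqref{def-pre-S-form} works exactly as you describe. However, your route is genuinely different from the paper's. The paper does not substitute into the definition at all; after computing $\om_F = \overline{A}\,\om\,A^{-1}$ it invokes Lemma~\ref{pre-S-lem}, which identifies $P_F(z_0)$ with the \emph{holomorphic} pre-Schwarzian $P\big(H-\overline{\om_F(z_0)}G\big)(z_0)$, observes that $H-\overline{\om_F(z_0)}G = A\big(h-\overline{\om(z_0)}g\big)$, and then applies the one-line holomorphic invariance \eqref{holo-mult-inv}. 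That argument is more modular -- it reuses a lemma that also drives the proof of affine invariance (Theorem~\ref{harm-afin-inv}) -- whereas your computation is self-contained and, in the $U_F = AU$ form, makes it transparent that the invariance is inherited directly from the logarithmic-derivative structure $P_f = U^{-1}DU$ of definition \eqref{def-pre-S}, in exact parallel with \eqref{holo-mult-inv}. Either write-up would be acceptable; if you present the direct version, the $U_F = AU$ formulation is the one to keep.
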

%%%%%%%%%%%%%%%%
\begin{proof}  
Let $f=h+\overline{g}$ and set $F=Af =H+\overline{G}$, where $H=Ah$ and $G=\overline{A} g$. We compute the dilatation of $F$ as
$$
\om_F \, = \, DG \, DH^{-1}  \, = \, (\overline{A} Dg) (A Dh)^{-1} \, = \, \overline{A}\, \om A^{-1}. 
$$
From Lemma~\ref{pre-S-lem} and equation \eqref{holo-mult-inv} we get that 
\begin{align*}
P_F(z_0) \, = & \, P\big(H-\overline{\om_F(z_0)}G \big)(z_0) \\ 
 = & \, P\big[A \big(h-\overline{\omega(z_0)}g\big)\big](z_0) \\
 = & \, P\big(h-\overline{\omega(z_0)}g\big)(z_0) \\
 = & \, P_f(z_0), 
\end{align*}
which completes the proof since $z_0$ was arbitrary. 
\end{proof}
%%%%%%%%%%%%%%%%

We now show that the pre-Schwarzian derivative \eqref{def-pre-S-form} is invariant under composition with affine transformations. 

%%%%%%%%%%%%%%%%
\begin{thm}[Affine invariance for $P_f$] \label{harm-afin-inv}
If $f$ is a pluriharmonic mapping then 
$$
P_{f+A \, \overline{f}} \, = \, P_{f}
$$
for every matrix $A$ with $\|A\|<1$.
\end{thm}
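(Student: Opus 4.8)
The plan is to reduce the affine invariance statement to the multiplicative invariance already proved in Theorem~\ref{harm-mult-inv}, by exploiting the pointwise characterization of $P_f$ from Lemma~\ref{pre-S-lem}. Write $f=h+\overline g$ with dilatation $\om$, set $F=f+A\,\overline f = H+\overline G$ with $H=h+A\,g$ and $G=g+\overline A\,h$, and recall from \eqref{om_F} that $\om_F=(\om+\overline A)(I_n+A\om)^{-1}$. Fix an arbitrary $z_0\in\Om$ and abbreviate $\om_0=\om(z_0)$, $(\om_F)_0=\om_F(z_0)$. By Lemma~\ref{pre-S-lem}, $P_F(z_0)=P\big(H-\overline{(\om_F)_0}\,G\big)(z_0)$ and $P_f(z_0)=P\big(h-\overline{\om_0}\,g\big)(z_0)$, so it suffices to show that the two holomorphic mappings $H-\overline{(\om_F)_0}\,G$ and $h-\overline{\om_0}\,g$ differ only by left multiplication by an invertible constant matrix, after which \eqref{holo-mult-inv} (or Theorem~\ref{harm-mult-inv}) finishes the argument since $z_0$ is arbitrary.

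So the key computation is purely algebraic at the single point $z_0$. Substitute $H=h+A\,g$ and $G=g+\overline A\,h$ to get
\begin{align*}
H-\overline{(\om_F)_0}\,G \, & = \, \big(I_n-\overline{(\om_F)_0}\,\overline A\big)h+\big(A-\overline{(\om_F)_0}\big)g .
\end{align*}
The goal is to factor this as $M\big(h-\overline{\om_0}\,g\big)$ for some invertible matrix $M$ depending only on $A$ and $\om_0$; equivalently, one needs $\big(A-\overline{(\om_F)_0}\big) = -\big(I_n-\overline{(\om_F)_0}\,\overline A\big)\overline{\om_0}$, i.e. $A-\overline{(\om_F)_0} = -\overline{\om_0}+\overline{(\om_F)_0}\,\overline A\,\overline{\om_0}$, which rearranges to $\overline{(\om_F)_0}\big(I_n - \overline A\,\overline{\om_0}\big) = \overline{\om_0}+A = \overline{\,\overline A+\om_0\,}$. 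Taking conjugates, this is exactly $(\om_F)_0\big(I_n-A\om_0\big) = \overline A+\om_0$, which is nothing but the defining relation $\om_F=(\om+\overline A)(I_n+A\om)^{-1}$ evaluated at $z_0$ — wait, the sign must be tracked carefully: from \eqref{om_F} we have $(\om_F)_0(I_n+A\om_0)=\om_0+\overline A$, so one should instead carry out the factorization using $\overline A$ replaced consistently, and the correct invertible factor turns out to be $M=I_n-\overline{(\om_F)_0}\,\overline A = \overline{(I_n-(\om_F)_0 A)}$, whose invertibility follows from $\|A\|<1$, $\|\om_0\|<1$ exactly as in the discussion preceding Theorem~\ref{factor}. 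Thus $H-\overline{(\om_F)_0}\,G = M\big(h-\overline{\om_0}\,g\big)$ with $M$ a constant invertible matrix.

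The main obstacle, such as it is, is bookkeeping: getting the conjugates, the order of the (noncommuting) matrix factors, and the signs right when passing between $\om_F$ and its appearance with a bar, and confirming that the leftover matrix $M$ is genuinely constant in $z$ (it is, since it depends only on $A$ and on the fixed value $\om(z_0)$) and genuinely invertible. Once the identity $H-\overline{(\om_F)_0}\,G = M\big(h-\overline{\om_0}\,g\big)$ is in hand, the conclusion is immediate: applying the holomorphic pre-Schwarzian to both sides and using its left-multiplicative invariance \eqref{holo-mult-inv} gives $P\big(H-\overline{(\om_F)_0}\,G\big)(z_0)=P\big(h-\overline{\om_0}\,g\big)(z_0)$, hence $P_F(z_0)=P_f(z_0)$ by Lemma~\ref{pre-S-lem}, and since $z_0\in\Om$ was arbitrary we obtain $P_{f+A\,\overline f}=P_f$. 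An alternative, essentially equivalent route would be to invoke Theorem~\ref{harm-mult-inv} and Theorem~\ref{chain-rule-pre-Schw} directly, but the Lemma~\ref{pre-S-lem} reduction is the cleanest since it sidesteps any need to differentiate $\om_F$.
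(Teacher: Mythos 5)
Your proposal is correct and follows essentially the same route as the paper's proof: reduce to the pointwise formula of Lemma~\ref{pre-S-lem}, factor $H-\overline{\om_F(z_0)}G=\big(I_n-\overline{\om_F(z_0)A}\big)\big(h-\overline{\om(z_0)}g\big)$, and conclude via \eqref{holo-mult-inv}; the paper merely organizes the algebra by first inverting \eqref{om_F} to get $\om=(I_n-\om_F A)^{-1}(\om_F-\overline{A})$, whereas you verify the equivalent matrix identity directly (your transient sign slip is self-corrected and the final factor $M=I_n-\overline{\om_F(z_0)}\,\overline{A}$ agrees with the paper's). The one point to tighten is the invertibility of $M$: since $\|\om_F\|$ need not be less than $1$ (see Example~\ref{counter-omega}), it does not follow from a norm bound on $\om_F(z_0)A$ nor from the cited discussion (which only gives invertibility of $I_n+A\om$); the paper establishes it via the identity $I_n-\om_F A=(I_n-\overline{A}A)(I_n+\om A)^{-1}$, which you should include.
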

%%%%%%%%%%%%%%%%
\begin{proof}
We write $f=h+\overline{g}$ and set $F=f+A \,\overline{f}$, for which we have the decomposition $F=H+\overline{G}$, where $H=h+A\,g$ and $G=g + \overline{A}\,h$. According to \eqref{om_F}, the dilatation of $F$ is given by $\omega_F \, = \, (\omega + \overline{A})(I_n+A\omega)^{-1}$. In order to solve for $\om$ we write
$$
(I_n-\omega_FA) \omega \, = \, \omega_F -\overline{A}. 
$$
We claim that $I_n-\omega_FA$ is an invertible matrix. To see this we first note the elementary equality 
$$
(I_n+A\om )^{-1}A \, = \, A (I_n+\om A)^{-1} 
$$
and then compute 
\begin{align*}
I_n-\omega_F A \,  = & \, I_n- (\omega + \overline{A})(I_n+A\omega)^{-1}A \\ 
 = & \, I_n- (\omega + \overline{A}) \, A \, (I_n + \omega A)^{-1}  \\ 
= & \, \big[ I_n + \om A - (\omega + \overline{A}) A \,\big] (I_n+ \om A)^{-1}  \\
= & \, ( I_n  - \overline{A} A ) (I_n+ \om A)^{-1},  
\end{align*}
which proves our claim. Therefore we have
\begin{equation} \label{om-om_F}
\omega \, = \, (I_n-\omega_FA)^{-1}(\omega_F-\overline{A} \, ).
\end{equation}
We fix a point $z_0$ and make the following computation: 
\begin{align*}
H-\overline{\omega_F(z_0)}G \, = \, & h+A\,g -\overline{\omega_F(z_0)}(g+\overline{A}\,h)\\
= \, & \big( I_n - \overline{\om_F(z_0) A} \,\big)h +\big( A - \overline{\om_F(z_0)} \, \big) g \\ 
= \, & \big( I_n - \overline{\om_F(z_0) A} \, \big) \Big( h +\big(I_n-\overline{\om_F(z_0) A} \, \big)^{-1} \big( A - \overline{\om_F(z_0)} \, \big) g\Big) \\
= \, & \big( I_n - \overline{\om_F(z_0) A} \, \big) \big(h-\overline{\omega(z_0)}g \big), 
\end{align*}
where we made use of \eqref{om-om_F} at the last step. Since $I_n-\overline{\om_F(z_0) A}$ is a constant and invertible matrix we have, in view of \eqref{holo-mult-inv}, that 
$$
P\big(H-\overline{\omega_F(z_0)}G\big) \, = \, P\big(h-\overline{\omega(z_0)}g\big). 
$$
Now, with two applications of Lemma~\ref{pre-S-lem} we get that 
$$
P_F(z_0)\, = \, P\big(H-\overline{\omega_F(z_0)}G\big)(z_0) \, = \, P\big(h-\overline{\omega(z_0)}g\big)(z_0) \, = \,P_f(z_0).
$$
Since $z_0$ was arbitrary the proof is complete. 
\end{proof}

%%%%%%%%%%%%%%%%%%%%%%%%%%%%%%%%%%
\subsection{Best affine approximation} \label{sub-sect-2.4} Our definition is primarily justified by the fact that it coincides with the second (analytic) derivative of the affine deviation of~$f$. To see this let $T$ be the best affine approximation of $f$ at the origin, \textit{i.e.}, the affine map that agrees with $f$ at its value and first analytic and anti-analytic derivatives. Clearly, we may assume that $f(0)=0$ and, in view of Theorem~\ref{harm-mult-inv}, that $Dh(0)=I_n$. Hence 
$$
T(z)=z+\overline{\om(0)} \, \overline{z}. 
$$
The \textit{affine deviation} of $f$ is then defined by $F=T^{-1}\circ f$. We compute $F=  C \, \big( f -\overline{\om(0)} \, \overline{f} \big)$, where $C=\big(I_n - \overline{\om(0)} \, \om(0) \big)^{-1}$. Writing $F=H+\overline{G}$ we find that 
$$
H\, = \, C \, \big( h -\overline{\om(0)} \, g \big) \qquad  \text{and} \qquad   G\, = \, \overline{C} \, \big( g - \om(0) h \big),  
$$
and, therefore, that
$$
DH\, = \, C \, \big( I_n -\overline{\om(0)} \, \om \big)Dh  \qquad  \text{and} \qquad   DG\, = \, \overline{C} \, \big( \om - \om(0)  \big) Dh.   
$$
Since $DH(0)=I_n$ and $DG(0) = 0$ we get from formulas \eqref{def-pre-S-form} and \eqref{Pfhol} that 
$$
P_F(0) = PH(0) = D^2H(0). 
$$
Noting that $P_f=P_F$ by the affine invariance shown in Theorem~\ref{harm-afin-inv} we deduce that $P_f(0)= D^2H(0)$. 

For an arbitrary point $a\in\Om$ we consider the translation $f_a(z) = f(z+a)$ and argue as above in order to compute the analytic part of the affine deviation $F_a$ of $f_a$ at the origin as 
$$
H_a(z) \, = \,Dh(a)^{-1} \big(I_n - \overline{\om(a)} \, \om(a) \big)^{-1} \big( h(z+a) - \overline{\om(a)} \, g(z+a) \big), 
$$
where $z\in\Om-a$. Hence, we find that
$$
P_f(a) \, = \, P_{f_a}(0) \, = \, P_{F_a}(0)  \, = \, PH_a(0) \, = \, D^2H_a(0) 
$$
and conclude that our definition of the pre-Schwarzian derivative is equal to the second (analytic) derivative of the affine deviation of a pluriharmonic mapping and, thus, being in line with ideas of Cartan \cite{Car} on the Schwarzian derivative, further exploited by Tamanoi \cite{Tam}, it constitutes a natural definition.

%%%%%%%%%%%%%%%%%%%%%%%%%%%%%%%%%%
\subsection{Holomorphic or vanishing pre-Schwarzian} For a holomorphic map $f$ it is elementary to see from definition \eqref{Pfhol} that if $Pf= 0$ in some open set then $f$ is linear, that is, $f(z)=Az+b$, for some invertible matrix $A$ and some $b\in\C^n$. 
 
In the plane on the other hand, in view of definition \eqref{Hern-Mart}, if the pre-Schwarzian of a harmonic mapping $f=h+\overline{g}$ is holomorphic then a differentiation with respect to $\overline{z}$ gives  
$$
 \frac{\overline{\om' }\om' }{(1-|\om |^2)^2}  \, = \, 0, 
$$
from which it follows that $\om$ is constant. 

However, it is relatively easy to produce examples of pluriharmonic mappings in $\C^2$ with vanishing pre-Schwarzian~\eqref{def-pre-S-form} but non-constant dilatation. 
 
%%%%%%%%%%%%%%%%
\begin{example} % \label{example-zero-pre}
Let $\phi$ be a holomorphic function in some domain $\Om_1$ in $\C$ and consider the mapping
$$
f(z,w) \, = \, \big( z , w+\overline{\phi(z)}\big)
$$
which is pluriharmonic in $\Om_1\times\C$ and also, clearly, univalent. A representation for $f$ is given by
$$
h(z,w) \, = \,(z,w) \qquad \text{and} \qquad g(z,w) \, = \, \big( 0 , \phi(z)\big), 
$$
and its dilatation is 
$$ % \label{example-om-non-const}
\om(z,w)\, = \, Dg(z,w) \, = \,
\begin{pmatrix}
0         & 0\\
\phi'(z) & 0
\end{pmatrix}, 
$$
which is non-constant if we choose $\phi$ to be non-linear. For the pre-Schwarzian of $f$ we easily compute 
$$
U \, = \,  \big(I_2 - \overline{\om}\,\om\big) Dh \, = \, I_2 
$$
and therefore $P_f =0$ in view of~\eqref{def-pre-S}. 
\end{example}
%%%%%%%%%%%%%%%%

Our main theorem here characterizes the case when the pre-Schwarzian derivative of a pluriharmonic mapping in the general class \eqref{class-loc-univ-pluri} is holomorphic. 

\begin{thm} \label{vanish-pre-S}
Let $f\in\cp(\Om)$ be such that $\om(z_0)=0$ for some $z_0\in \Omega$. Then $P_f$ is holomorphic if and only if $\om\overline\om\equiv0$. 
\end{thm}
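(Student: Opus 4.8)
\emph{Reduction.} The idea is to isolate the holomorphic part of $P_f$. In \eqref{def-pre-S-form} the term $Dh^{-1}D^2h=Ph$ is holomorphic, so $P_f$ is holomorphic if and only if the bilinear mapping
$$
E\langle\cdot,\cdot\rangle \;:=\; Dh^{-1}\big(I_n-\overline{\om}\om\big)^{-1}\overline{\om}\;D\om\langle\cdot,Dh\,\cdot\rangle
$$
is holomorphic, i.e.\ $\partial E/\partial\overline{z_\ell}\equiv 0$ for every $\ell$. Since $Dh$, $D^2h$ and $D\om$ are holomorphic, all the antiholomorphic dependence of $E$ is carried by the two occurrences of $\overline{\om}$. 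Differentiating and simplifying as in the matrix manipulations of the proof of Theorem~\ref{factor}, one obtains
$$
\frac{\partial}{\partial\overline{z_\ell}}\Big[\big(I_n-\overline{\om}\om\big)^{-1}\overline{\om}\Big] \;=\; \big(I_n-\overline{\om}\om\big)^{-1}\,\overline{\om_\ell}\,\big(I_n-\om\overline{\om}\big)^{-1},
\qquad \om_\ell := \frac{\partial\om}{\partial z_\ell},
$$
where one uses the identity $\om(I_n-\overline{\om}\om)^{-1}=(I_n-\om\overline{\om})^{-1}\om$ together with its consequence $\om(I_n-\overline{\om}\om)^{-1}\overline{\om}+I_n=(I_n-\om\overline{\om})^{-1}$. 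Substituting back and cancelling the invertible factors $Dh$ and $I_n-\overline{\om}\om$ leaves the reformulation
$$
P_f \text{ is holomorphic} \iff \overline{\om_\ell}\,M\,\om_k\equiv 0 \ \text{ for all }k,\ell, \qquad M:=\big(I_n-\om\overline{\om}\big)^{-1}.
$$

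\emph{The easy direction.} Assume $\om\overline{\om}\equiv 0$. Taking entrywise conjugates, which commutes with matrix multiplication, gives $\overline{\om}\om\equiv 0$ as well, so $M\equiv I_n$. Differentiating $\om\overline{\om}\equiv 0$ in $z_k$ (which annihilates the antiholomorphic matrix $\overline{\om}$) yields $\om_k\overline{\om}\equiv 0$, and differentiating that in $\overline{z_\ell}$ yields $\om_k\overline{\om_\ell}\equiv 0$; conjugating and relabelling gives $\overline{\om_\ell}\,\om_k\equiv 0$. Hence $\overline{\om_\ell}M\om_k\equiv 0$ and $P_f$ is holomorphic. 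This direction does not use the normalization $\om(z_0)=0$.

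\emph{The converse.} Now suppose $\overline{\om_\ell}M\om_k\equiv 0$ for all $k,\ell$ and $\om(z_0)=0$. I would prove that \emph{every} partial derivative of $Q:=\om\overline{\om}$ vanishes at $z_0$; since $Q$ is real-analytic and $\Om$ is connected, the identity theorem for real-analytic functions then forces $Q\equiv 0$ on $\Om$, which is the assertion. Two observations are used. First, as $\om$ is holomorphic the Leibniz expansion collapses and $\partial_z^{\sigma}\partial_{\overline{z}}^{\tau}Q=(\partial_z^{\sigma}\om)\,\overline{\partial_z^{\tau}\om}$ for all multi-indices $\sigma,\tau$; consequently every pure derivative of $Q$ carries a factor $\om(z_0)$ or $\overline{\om(z_0)}$ and hence vanishes at $z_0$, so only the mixed values $\partial_z^{\sigma}\om(z_0)\,\overline{\partial_z^{\tau}\om(z_0)}$ with $|\sigma|,|\tau|\ge 1$ remain to be killed. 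Second, from the relation $M=I_n+QM$ and Leibniz, if all derivatives of $Q$ at $z_0$ of order $\le N$ vanish then so do all derivatives of $M$ at $z_0$ of order $\le N$, except $M(z_0)=I_n$. Now induct on $m=|\sigma|+|\tau|$ (the pure cases being settled above): assuming all derivatives of $Q$ at $z_0$ of order $<m$ vanish, take $\sigma,\tau$ with $|\sigma|,|\tau|\ge 1$, pick $\ell$ in the support of $\sigma$ and $k$ in the support of $\tau$, apply $\partial_z^{\tau-e_k}\partial_{\overline{z}}^{\sigma-e_\ell}$ to the identity $\overline{\om_\ell}\,M\,\om_k\equiv 0$, and evaluate at $z_0$. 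Since $\overline{\om_\ell}$ can only absorb antiholomorphic derivatives and $\om_k$ only holomorphic ones, and since — by the second observation and the inductive hypothesis — every term in which $M$ receives at least one derivative vanishes at $z_0$, the only surviving term is $\overline{\partial_z^{\sigma}\om(z_0)}\,M(z_0)\,\partial_z^{\tau}\om(z_0)=\overline{\partial_z^{\sigma}\om(z_0)}\,\partial_z^{\tau}\om(z_0)$. Hence this vanishes, and conjugating gives $\partial_z^{\sigma}\partial_{\overline{z}}^{\tau}Q(z_0)=\partial_z^{\sigma}\om(z_0)\,\overline{\partial_z^{\tau}\om(z_0)}=0$, completing the induction and the proof.

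\emph{Main obstacle.} The delicate part is the bookkeeping in this last induction: controlling, under the Leibniz rule, how the derivatives of the resolvent $M=(I_n-\om\overline{\om})^{-1}$ interact with the factors $\overline{\om_\ell}$ and $\om_k$, and organizing matters so that the inductive hypothesis annihilates every term except the wanted one. By contrast, the reduction in the first step is a direct (if somewhat lengthy) matrix computation patterned on the proof of Theorem~\ref{factor}, and the easy direction is routine.
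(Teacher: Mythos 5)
Your proposal is correct, and after the common first step it takes a genuinely different route from the paper. The reduction of holomorphy of $P_f$ to the identity $\overline{\om_\ell}\,(I_n-\om\overline\om)^{-1}\om_k\equiv0$ is exactly the paper's condition \eqref{D-bar-calc-2}, and your easy direction is the paper's (modulo working at the level of the reduced condition rather than of \eqref{def-pre-S-form}). For the converse, the paper differentiates \eqref{D-bar-calc-2} repeatedly in $z$ only, obtaining $\overline{D\om}\langle\cdot,(I_n-\om\overline\om)^{-1}D^k\om\langle\cdot,\ldots,\cdot\rangle\rangle=0$ on all of $\Om$, sums these against the Taylor series of $\om$ centered at $\al$, and sets $z=z_0$ to extract the intermediate identity $D\om\langle\cdot,\overline\om\,\cdot\rangle=0$ on the region $E(z_0)$ of points closer to $z_0$ than to $\partial\Om$; a second round of the same differentiate-and-sum device then gives $\om\overline\om=0$ on $E(z_0)$, and the identity principle finishes. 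You instead never leave the point $z_0$: a Leibniz induction on the total order of differentiation, supported by the observation that $M=I_n+\om\overline\om\,M$ forces all derivatives of the resolvent of order $\le N$ (except the zeroth) to vanish at $z_0$ once those of $\om\overline\om$ do, annihilates every mixed Taylor coefficient of $\om\overline\om$ at $z_0$, and the identity principle is invoked in its ``all derivatives vanish at a point'' form. I checked the key points of your induction --- the formula $\partial_z^{\sigma}\partial_{\overline z}^{\tau}(\om\overline\om)=(\partial_z^{\sigma}\om)\overline{(\partial_z^{\tau}\om)}$, the fact that only the term with no derivative falling on $M$ survives, and the base cases --- and they hold. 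Your version avoids the two-stage Taylor summation and the auxiliary set $E(z_0)$ at the price of heavier multi-index bookkeeping; the paper's version yields the cleaner intermediate identity $D\om\langle\cdot,\overline\om\,\cdot\rangle=0$ and confines the combinatorics to repeated use of the product rule. Both arguments are complete.
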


%%%%%%%%%%%%%%%%
\begin{proof} Observe first that if $\om\overline\om\equiv0$ then a differentiation (after a conjugation) shows that $\overline\om D\om\equiv0$. In view of \eqref{def-pre-S-form} we have that $P_f=Ph$ which is clearly holomorphic. 

Conversely, if $P_f$ is holomorphic then, by formula \eqref{def-pre-S-form}, we have that
$$
0 \, = \, \overline{D}P_f\langle \cdot,\cdot,\cdot \rangle \, = \, - Dh^{-1} \overline{D}\big[ (I_n - \overline{\om}\om )^{-1} \, \overline{\om} \big]\, \big\langle \cdot, D\om\langle \cdot,Dh \, \cdot \rangle \big\rangle. 
$$
Since $Dh$ is invertible this is equivalent to 
\begin{equation} \label{D-bar-calc-1}
\overline{D}\big[ (I_n - \overline{\om}\om )^{-1} \, \overline{\om} \big]\, \big\langle \cdot, D\om\langle \cdot, \cdot \rangle \big\rangle \, = \, 0. 
\end{equation}
Using the differentiation properties \eqref{prod-rule} and \eqref{deriv-inverse} we compute 
\begin{align*}
\overline{D}\big[ (I_n - \overline{\om}\om )^{-1} \, \overline{\om} \big]\langle \cdot, \cdot \rangle  \, & = \, (I_n - \overline{\om}\om )^{-1} \overline{D\om}\langle \cdot, \om(I_n - \overline{\om}\om )^{-1}\overline{\om} \, \cdot \rangle + (I_n - \overline{\om}\om )^{-1}\overline{D\om}\langle \cdot, \cdot \rangle \\
& = \, (I_n - \overline{\om}\om )^{-1} \overline{D\om}\langle \cdot, \big( \om\overline{\om}(I_n - \om\overline{\om} )^{-1} +I_n \big)  \cdot \rangle \\
& = \, (I_n - \overline{\om}\om )^{-1} \overline{D\om}\langle \cdot,  (I_n - \om\overline{\om} )^{-1} \, \cdot \rangle,  
\end{align*}
so that \eqref{D-bar-calc-1} is equivalent to 
\begin{equation} \label{D-bar-calc-2}
\overline{D\om} \big\langle \cdot, (I_n - \om\overline{\om} )^{-1} D\om\langle \cdot, \cdot \rangle \big\rangle \, = \, 0. 
\end{equation}
In order to differentiate \eqref{D-bar-calc-2} with respect to $z$ we compute
\begin{align*}
D\big[ (I_n - \om\overline{\om} )^{-1} D\om \big]\langle \cdot, \cdot, \cdot \rangle  \,  = & \, (I_n - \om\overline{\om} )^{-1} D\om \big\langle \cdot, \overline{\om} (I_n - \om\overline{\om} )^{-1} D\om \langle \cdot, \cdot \rangle  \big\rangle \\ 
& + (I_n - \om\overline{\om} )^{-1} D^2\om \langle \cdot, \cdot, \cdot \rangle 
\end{align*}
and apply to this the operator $\overline{D\om}$. Then a further usage of \eqref{D-bar-calc-2} shows that the first summand vanishes so that we obtain 
$$
\overline{D\om} \big\langle \cdot, (I_n - \om\overline{\om} )^{-1} D^2\om\langle \cdot, \cdot, \cdot \rangle \big\rangle \, = \, 0. 
$$
This can be repeated an arbitrary number of times, so that after $k-1$ differentiations of \eqref{D-bar-calc-2} we have that 
\begin{equation} \label{D-bar-calc-3}
\overline{D\om} \big\langle \cdot, (I_n - \om\overline{\om} )^{-1} D^k\om\langle \cdot,\ldots, \cdot \rangle \big\rangle \, = \, 0, \qquad k\geq 1, 
\end{equation}
holds throughout the domain $\Om$. According to the Taylor formula centered at some $\al $ in $\Om$ we have that
\begin{equation} \label{Taylor}
\om(z) \,= \, \sum_{k=0}^\infty \frac{1}{k!} D^k\om(\al) (z-\al)^k, \qquad |z-\al|<\de(\al), 
\end{equation}
where $\delta(\al) = {\rm dist}(\al, \partial\Om)$. We apply to this the operator $\overline{D\om} \langle \cdot, (I_n - \om\overline{\om} )^{-1} \cdot \rangle $ evaluated at $\al$ so that, in view of \eqref{D-bar-calc-3}, we obtain 
\begin{align*}
\overline{D\om(\al)} \big\langle \cdot, \big(I_n - \om(\al)\overline{\om(\al)} \,\big)^{-1}\om(z) \cdot \big\rangle & \,= \, \overline{D\om(\al)} \big\langle \cdot, \big(I_n - \om(\al)\overline{\om(\al)} \,\big)^{-1}\om(\al)\cdot \big\rangle \\
& \,= \, \overline{D\om(\al)} \big\langle \cdot, \om(\al) \big(I_n - \overline{\om(\al)}\om(\al) \big)^{-1} \cdot \big\rangle. 
\end{align*}
Since $\al\in\Om$ is arbitrary we may take $z=z_0$ in order to use the assumption $\om(z_0)=0$. Denoting by 
$$
E(z) \, = \, \{ \al \in\Om \, : \, |\al-z| <\delta(\al)\}
$$
the set of points in $\Om$ which lie closer to $z$ than to $\partial \Om$ (that is, $\partial E(z)$ consists of points which are equidistant to $z$ and $\partial \Om$), we get that 
$$
\overline{D\om(\al)} \big\langle \cdot, \om(\al) \big(I_n - \overline{\om(\al)}\om(\al) \big)^{-1} \cdot \big\rangle \,= \, 0, \qquad  \al \in E(z_0).
$$
Equivalently, we have that $D\om \langle \cdot, \overline{\om}\, \cdot \rangle = 0$ in $E(z_0)$. After $k-1$ differentiations we get that 
$$
D^k\om \langle \cdot,\ldots, \cdot, \overline{\om}\, \cdot \rangle = 0, \qquad k\geq 1, 
$$ 
in $E(z_0)$. We consider again the Taylor formula \eqref{Taylor} centered at some $\al \in E(z_0)$ and multiply it on the right with $\overline{\om(\al)}$ in order to obtain 
$$
\om(z)\overline{\om(\al)} \, = \, \om(\al)\overline{\om(\al)}, \qquad |z-\al|<\de(\al).
$$
Taking $z=z_0$ we conclude that $\om\overline{\om}=0$ in $E(z_0)$. In view of the identity principle for real analytic functions (see Corollary 2.3.8 in \cite{Kr}) we have that $\om\overline{\om}=0$ throughout $\Om$. 
\end{proof}
%%%%%%%%%%%%%%%%

Note that if $f\in\cp(\Om)$ does not satisfy the second assumption of Theorem~\ref{vanish-pre-S} then we may normalize it by means of the affine transformation $F=f-\overline{\om(z_0)}\, \overline{f}$, so that $\om_F(z_0)=0$, while at the same time having $P_F=P_f$ in view of Theorem~\ref{harm-afin-inv}. Since the point $z_0\in\Om$ is arbitrarily chosen it is easy to see that $P_f$ is holomorphic if and only if 
$$
\big(\om(\al) - \om(\beta) \big) \big(I_n - \overline{\om(\beta)}\om(\al)\big)^{-1} \big(\, \overline{\om(\al)} - \overline{\om(\beta)}\, \big) \, = \, 0, \quad \text{for all} \;\; \al,\beta\in\Om. 
$$

As a direct consequence of Theorem~\ref{vanish-pre-S}, the case when $P_f$ vanishes is characterized in the following proposition. 
\begin{cor}  \label{cor-vanish-pre}
Let $f\in\cp(\Om)$ be such that $\om(z_0)=0$ for some $z_0\in \Om$. Then $P_f\equiv 0$ if and only if $\om\overline\om\equiv0$ and $h$ is linear. 
\end{cor}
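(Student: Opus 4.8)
The plan is to deduce Corollary~\ref{cor-vanish-pre} directly from Theorem~\ref{vanish-pre-S}, since it characterizes precisely when the additional term in \eqref{def-pre-S-form} survives. First I would observe that if $\om\overline\om\equiv0$ then, as shown at the very beginning of the proof of Theorem~\ref{vanish-pre-S}, a differentiation after conjugation gives $\overline\om\,D\om\equiv0$, and hence formula \eqref{def-pre-S-form} collapses to $P_f=Ph$. If in addition $h$ is linear, say $h(z)=Az+b$ with $A$ invertible, then $D^2h\equiv0$ and so $Ph=Dh^{-1}D^2h\equiv0$, giving $P_f\equiv0$. This settles the easy direction.

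For the converse, assume $\om(z_0)=0$ and $P_f\equiv0$. In particular $P_f$ is holomorphic, so Theorem~\ref{vanish-pre-S} applies and yields $\om\overline\om\equiv0$. As noted above, this forces $\overline\om\,D\om\equiv0$, and therefore the second term in \eqref{def-pre-S-form} vanishes identically, leaving $P_f=Ph$. But $P_f\equiv0$, so $Ph\equiv0$, i.e. $Dh^{-1}D^2h\equiv0$ throughout $\Om$. Since $Dh$ is invertible this is equivalent to $D^2h\equiv0$, which (as recalled in the excerpt for holomorphic maps with vanishing pre-Schwarzian) means precisely that $h$ is linear. Combining the two implications gives the claimed equivalence.

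I do not anticipate a genuine obstacle here; the corollary is essentially bookkeeping on top of Theorem~\ref{vanish-pre-S}. The only point requiring a word of care is the step $\om\overline\om\equiv0 \implies \overline\om\,D\om\equiv0$: one conjugates the identity $\om\overline\om\equiv0$ to get $\om\,\overline\om\equiv0$ again (it is self-conjugate up to transpose bookkeeping), and then differentiates with respect to $z$; using the product rule \eqref{prod-rule} one obtains $D\om\langle\cdot,\overline\om\,\cdot\rangle + \om\,\overline{D\om}\langle\cdot,\cdot\rangle = 0$, and pairing this structure appropriately — or more simply differentiating $\overline{\om}\om\equiv 0$ with respect to $\overline z$ — isolates the term $\overline\om\,D\om$ that actually appears in \eqref{def-pre-S-form}. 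I would state this cleanly rather than grind the indices, since it is exactly the computation already invoked at the start of the proof of Theorem~\ref{vanish-pre-S}.

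\begin{proof}
If $\om\overline\om\equiv0$ then, as in the proof of Theorem~\ref{vanish-pre-S}, a differentiation after a conjugation shows that $\overline\om\,D\om\equiv0$, so by \eqref{def-pre-S-form} we have $P_f=Ph$. If moreover $h$ is linear then $D^2h\equiv0$, hence $Ph\equiv0$ and therefore $P_f\equiv0$.

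Conversely, suppose $\om(z_0)=0$ for some $z_0\in\Om$ and $P_f\equiv0$. In particular $P_f$ is holomorphic, so Theorem~\ref{vanish-pre-S} gives $\om\overline\om\equiv0$. As above this implies $\overline\om\,D\om\equiv0$, whence \eqref{def-pre-S-form} reduces to $P_f=Ph$. Since $P_f\equiv0$ we get $Ph=Dh^{-1}D^2h\equiv0$, and as $Dh$ is invertible this means $D^2h\equiv0$, that is, $h$ is linear.
\end{proof}
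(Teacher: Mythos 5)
Your proof is correct and follows exactly the route the paper intends: the corollary is stated there as a direct consequence of Theorem~\ref{vanish-pre-S}, and your argument supplies precisely the intended bookkeeping ($P_f\equiv0$ implies $P_f$ holomorphic, hence $\om\overline\om\equiv0$, hence $P_f=Ph$, hence $h$ linear, and conversely). The only nitpick is in your preamble: differentiating $\overline\om\,\om\equiv0$ with respect to $\overline z$ yields $\overline{D\om}\langle\cdot,\om\,\cdot\rangle\equiv0$ rather than the term $\overline\om\,D\om$ you need; the correct route, which your actual proof uses, is to conjugate and then differentiate with respect to $z$.
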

%%%%%%%%%%%%%%%%

In order to see if any condition on $P_f$ can imply that $f$ is univalent we recall now Corollary 2.2 from \cite{CHHK14}. We formulate it here for a general simply connected domain $\Om\subset\C^n$ (since its proof does not depend on the domain of definition), even though in \cite{CHHK14} it was stated for the unit ball in $\C^n$.

\begin{thmother}[\cite{CHHK14}] \label{Chuaqui-HHK}
Let $h$ be a biholomorphic mapping in $\Om$ for which $h(\Om)$ is convex and let $f=h+\overline{g}$ be a pluriharmonic mapping for which $\|\om\|<1$ in $\Om$. Then $f$ is univalent. 
% and sense-preserving
\end{thmother}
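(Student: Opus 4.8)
The plan is to reduce the statement, by a biholomorphic change of coordinates, to the special case in which the analytic part is the identity and the \emph{domain itself} is convex, and then to conclude by a direct mean-value estimate in the spirit of the Clunie--Sheil-Small shear argument for planar harmonic mappings.

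First I would use that $h$, being biholomorphic, is injective: then $f$ is univalent on $\Om$ exactly when $\tilde f := f\circ h^{-1}$ is univalent on the convex domain $\Om' := h(\Om)$. Setting $\tilde g := g\circ h^{-1}$, which is holomorphic on $\Om'$, one has $\tilde f(w) = w + \overline{\tilde g(w)}$, and the chain rule gives
$$
D\tilde g(w) \, = \, Dg\big(h^{-1}(w)\big)\, Dh\big(h^{-1}(w)\big)^{-1} \, = \, \om\big(h^{-1}(w)\big),
$$
so that $\|D\tilde g(w)\| = \|\om(h^{-1}(w))\| < 1$ for every $w\in\Om'$. This reduces the problem to the following: if $\Om$ is convex and $f(z) = z + \overline{g(z)}$ with $\|Dg\| < 1$ throughout $\Om$, then $f$ is injective.

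For the reduced statement I would argue by contradiction: assuming $f(z_1) = f(z_2)$ for distinct $z_1, z_2\in\Om$, subtracting yields $z_1 - z_2 = \overline{g(z_2) - g(z_1)}$, hence $|z_1 - z_2| = |g(z_1) - g(z_2)|$. On the other hand, the segment $\ga(t) = (1-t)z_2 + t z_1$, $t\in[0,1]$, stays in $\Om$ by convexity, so the fundamental theorem of calculus gives
$$
g(z_1) - g(z_2) \, = \, \int_0^1 Dg\big(\ga(t)\big)(z_1 - z_2)\, dt,
$$
and since $t\mapsto\|Dg(\ga(t))\|$ is continuous and strictly less than $1$ on the compact interval $[0,1]$, it is bounded there by some $\rho < 1$; therefore $|g(z_1) - g(z_2)| \leq \rho\,|z_1 - z_2| < |z_1 - z_2|$, a contradiction. (The same reasoning applies, with a slightly worse constant, whenever $h(\Om)$ is only assumed linearly connected, which is presumably the level of generality of the original statement in \cite{CHHK14}.)

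The computation itself is elementary. The one point that deserves attention --- and the place where the specific hypothesis enters --- is the reduction step: composing with $h^{-1}$ is precisely what turns ``$h(\Om)$ convex'' into ``convex domain'' and ``$\|\om\| < 1$'' into ``the anti-analytic part is a pointwise linear contraction'', and it is the convexity of the transformed domain that makes the segment in the mean-value estimate legitimate. I do not expect any genuine obstacle beyond recognizing this normalization.
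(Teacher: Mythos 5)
Your argument is correct and complete: the reduction $\tilde f=f\circ h^{-1}=\mathrm{id}+\overline{g\circ h^{-1}}$ turns the hypothesis $\|\om\|<1$ into a pointwise contraction bound on $D(g\circ h^{-1})$, and the segment estimate on the convex image then forces injectivity. Note that the paper does not prove this statement but imports it as Corollary 2.2 of \cite{CHHK14}; your proof is essentially the one given there (where, as you observe, the same length estimate along curves yields the more general version for linearly connected image domains, with $\|\om\|$ bounded by the reciprocal of the linear-connectivity constant).
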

%%%%%%%%%%%%%%%%

With the aid of this we can prove the following. 

\begin{cor}
Let $\Om$ be a convex domain  and $f\in\cp(\Om)$ be such that $\|\om\|<1$ in $\Om$ and $\om(z_0)=0$ for some $z_0\in \Om$. Then $P_f\equiv 0$ implies that $f$ is univalent. 
\end{cor}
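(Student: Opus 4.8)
The plan is to combine Corollary~\ref{cor-vanish-pre} with Theorem~\ref{Chuaqui-HHK}. Since $f\in\cp(\Om)$ satisfies $\om(z_0)=0$ and $P_f\equiv0$, Corollary~\ref{cor-vanish-pre} tells us that $\om\overline\om\equiv0$ and that $h$ is linear, say $h(z)=Az+b$ with $A$ invertible. The only thing that prevents an immediate appeal to Theorem~\ref{Chuaqui-HHK} is that the latter requires $h(\Om)$ to be \emph{convex}, whereas our hypothesis only says that $\Om$ is convex. But $h$ is an affine bijection of $\C^n$, and affine maps send convex sets to convex sets; hence $h(\Om)=A\Om+b$ is convex. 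This is the only real step, and it is routine.

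Having established that $h$ is biholomorphic in $\Om$ with $h(\Om)$ convex, and knowing by hypothesis that $\|\om\|<1$ throughout $\Om$, we are exactly in the setting of Theorem~\ref{Chuaqui-HHK}, which yields that $f$ is univalent. So the proof is essentially a two-line deduction: first quote Corollary~\ref{cor-vanish-pre} to get the structure of $h$ and $\om$, then observe that linearity of $h$ upgrades ``$\Om$ convex'' to ``$h(\Om)$ convex'', and finally invoke Theorem~\ref{Chuaqui-HHK}.

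I do not anticipate a genuine obstacle here; the statement is designed as a corollary of the two cited results. The only point requiring a word of care is making sure that $h$ being linear is used correctly: Corollary~\ref{cor-vanish-pre} gives $h$ linear (equivalently, $Dh$ constant and invertible, so $h(z)=Az+b$), and one should note that such an $h$ is globally biholomorphic, so in particular biholomorphic on $\Om$, with convex image. One might also remark that the hypothesis $\om(z_0)=0$ is not restrictive in spirit, since any $f\in\cp(\Om)$ can be normalized by the affine transformation $F=f-\overline{\om(z_0)}\,\overline f$, which preserves both $P_f$ (Theorem~\ref{harm-afin-inv}) and, when $\|\om\|<1$, the condition $\|\om_F\|<1$ by Theorem~\ref{factor}; but for the stated corollary this remark is optional.

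\begin{proof}
By Corollary~\ref{cor-vanish-pre}, the hypotheses $f\in\cp(\Om)$, $\om(z_0)=0$ and $P_f\equiv0$ imply that $\om\overline\om\equiv0$ and that $h$ is linear, say $h(z)=Az+b$ for some invertible matrix $A$ and some $b\in\C^n$. In particular $h$ is biholomorphic in $\Om$, and since $\Om$ is convex and $h$ is an affine bijection, the image $h(\Om)=A\Om+b$ is convex as well. As we are assuming $\|\om\|<1$ in $\Om$, Theorem~\ref{Chuaqui-HHK} applies and shows that $f$ is univalent.
\end{proof}
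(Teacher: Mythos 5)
Your argument is correct and is exactly the paper's proof: apply Corollary~\ref{cor-vanish-pre} to conclude $h$ is linear, note that a linear (affine) bijection maps the convex domain $\Om$ to a convex domain, and then invoke Theorem~\ref{Chuaqui-HHK} using $\|\om\|<1$. No gaps; the extra remarks about normalization are unnecessary but harmless.
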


%%%%%%%%%%%%%%%%
\begin{proof}
Since $h$ is linear by Corollary \ref{cor-vanish-pre}, its image is a convex domain and Theorem~\ref{Chuaqui-HHK} may be applied. 
\end{proof}

%%%%%%%%%%%%%%%%%%%%%%%%%%%%%%%%%%
\subsection{Stability for the pre-Schwarzian} Here we turn to the notion of stability as it was introduced in the plane in \cite{HM13} and generalized to the following form in \cite{CHHK14}. A property is said to be \emph{stable} if it is shared by $f=h+\overline{g}$ and $F=h+A\overline{g}$ for all unitary matrices $A$.  Similarly, we say that an expression is stable if it is invariant under the transformation $f\mapsto h+A\overline{g}$. We show that the pre-Schwarzian is stable under rotations of the identity and that these are the only unitary matrices with this property. 

We denote the unit circle by $\T=\{z\in\C:|z|=1\}$. Also, we denote by $e_j$ the vectors of the standard basis and by $E_{ij}$ the matrix whose only non-zero entry is at the $(i,j)$-position and is equal to 1. We indicate with $R_k$ on the left and $C_k$ on the top of a matrix the position of the k-th row and column, respectively. For example, the matrix $E_{ij}$ could be given by
$$
E_{ij} \, = \,  \kbordermatrix{
         &   & C_j       \\
    R_i &  &   1   \\
       &  &     },  
$$
where all entries that do not appear are equal to zero. 

In the following the standard notation $f=h+\overline{g}$ and $F=h+A\overline{g}$ is used. 

\begin{thm}  \label{stable-thm}
Let $A$ be a unitary matrix. Then $P_F=P_f$ for all $f\in\cp(\Om)$ with $\|\om\|<1$ if and only if $A=\la I_n$ for some $\la\in\T$. 
\end{thm}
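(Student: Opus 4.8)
The plan is to prove the two implications separately. The direction ``$A=\lambda I_n$ implies $P_F=P_f$'' is the easy one: if $A=\lambda I_n$ with $|\lambda|=1$, then $F=h+\overline{\lambda g}=h+\bar\lambda\,\overline{g}$, so $G=\bar\lambda g$ and hence $\omega_F=DG\,DH^{-1}=\bar\lambda\,\omega$. Consequently $\overline{\omega_F}\,\omega_F=\bar\lambda\lambda\,\overline{\omega}\,\omega=\overline{\omega}\,\omega$, so the matrix $U=(I_n-\overline{\omega}\,\omega)Dh$ in definition \eqref{def-pre-S} is literally unchanged, and therefore $P_F=U^{-1}DU=P_f$. (Alternatively one can invoke Theorem \ref{harm-mult-inv} with the scalar matrix $\bar\lambda I_n$ together with multiplicative invariance, but the direct computation is cleaner.)

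For the converse I would argue by contraposition: assuming $A$ is unitary but not a unimodular multiple of the identity, I will exhibit a single mapping $f\in\cp(\Om)$ with $\|\omega\|<1$ for which $P_F(z_0)\neq P_f(z_0)$ at some point. The natural test mappings are the ones from the Example preceding Theorem \ref{vanish-pre-S}, or slight variants thereof: take $h$ to be a linear map (so that $Ph$ terms are controlled) and $g$ a polynomial map chosen so that its dilatation $\omega$ is a prescribed small constant-plus-linear matrix at $z_0$. Using Lemma \ref{pre-S-lem}, the difference $P_F(z_0)-P_f(z_0)$ reduces to comparing the pre-Schwarzians of the two holomorphic mappings $h-\overline{\omega(z_0)}g$ and $H-\overline{\omega_F(z_0)}G = Ah - \overline{A\,\omega(z_0)}\cdot (Ag)$ wait --- more carefully, with $H=h$, $G=Ag$ one has $\omega_F=A\omega$, so the relevant holomorphic comparison mapping for $F$ is $h-\overline{A\omega(z_0)}\,A g$. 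Plugging into the formula derived in the proof of Lemma \ref{pre-S-lem} (the expression $Ph+Dh^{-1}(I_n+B\omega)^{-1}B\,D\omega\langle\cdot,Dh\,\cdot\rangle$ with $B=-\overline{\omega(z_0)}$ versus $B=-\overline{A\omega(z_0)}A$), the obstruction to equality becomes a purely algebraic/matrix condition that must hold for all admissible $\omega$, and I want to show it forces $A=\lambda I_n$.

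The main obstacle --- and the step that needs real work --- is the algebraic argument that ``$P_F=P_f$ for \emph{all} such $f$'' collapses the family of matrix identities down to $A=\lambda I_n$. Here is where the basis matrices $E_{ij}$ and the careful row/column bookkeeping set up in the paragraph before the theorem come in: one feeds in mappings whose dilatation near $z_0$ is $\omega(z)=tE_{ij}+(\text{linear terms involving other }E_{k\ell})$ for small $t$, extracts the condition at first order in $t$ (which should read something like $\overline{A}\,E_{ij} = E_{ij}\,\overline{A}^{\,t}$ or $A E_{ij} \bar A^{*} = E_{ij}$ after accounting for how conjugation interacts with the complex-linear structure), and then varies $i,j$. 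Requiring this for every pair $(i,j)$, together with unitarity of $A$, should pin down $A$ as a diagonal matrix with a single common phase, i.e.\ $A=\lambda I_n$, $\lambda\in\T$. I would organize the computation so that the leading-order term in $t$ isolates exactly one matrix equation per choice of perturbation direction, so that the conclusion $A=\lambda I_n$ drops out by inspection rather than by a long case analysis; checking that the chosen $f$ genuinely lies in $\cp(\Om)$ with $\|\omega\|<1$ (true for $t$ small and $\Om$ chosen appropriately, e.g. a small ball) is routine and I would dispatch it in one sentence.
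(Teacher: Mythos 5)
The easy implication and your direct verification via $U=(I_n-\overline{\omega}\omega)Dh$ are fine. The converse, however, is only a plan: the step you yourself identify as ``the main obstacle'' is precisely the step you do not carry out, and the bookkeeping you do show is already off. Writing $F=h+A\overline{g}=H+\overline{G}$ gives $G=\overline{A}g$ and $\omega_F=\overline{A}\omega$, not $G=Ag$ and $\omega_F=A\omega$; these conjugations matter, because the obstruction you must defeat is (as the paper derives from \eqref{def-pre-S-form})
$$
\big( A\,\overline{\omega}\,\overline{A}-\overline{\omega}\big)\big(I_n-\omega\overline{\omega}\big)^{-1}D\omega \, = \, 0 ,
$$
which is neither of the two identities you guess (``$\overline{A}E_{ij}=E_{ij}\overline{A}^{\,t}$ or $AE_{ij}\bar A^{*}=E_{ij}$''). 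Moreover, passing from ``this product vanishes for every admissible $f$'' to ``$A M\overline{A}=M$ for every matrix $M$, hence $A=\lambda I_n$'' is not automatic: the matrix factor depends on $\omega(z_0)$ while the right-hand factor depends on $D\omega(z_0)$, and these are coupled through $g$. You must exhibit test mappings for which $\omega(z_0)$ is a prescribed matrix \emph{and} the vectors $D\omega(z_0)\langle u,v\rangle$ span enough of $\C^n$ (while keeping $\|\omega\|<1$ on all of $\Omega$), and then do the linear algebra. None of this is present; ``should read something like'' and ``should pin down'' are exactly the claims that need proof.

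For comparison, the paper's argument splits the converse into two concrete cases. If $a_{ij}\neq0$ for some $i\neq j$, it takes $h(z)=z$ and $g(z)=\tfrac12 z_i^2e_j$, so that $\omega=z_iE_{ji}$ satisfies $\omega\overline{\omega}\equiv0$ and hence $P_f\equiv0$, while $\omega_F\overline{\omega_F}=a_{ij}\overline{z_i}\,\omega_F\not\equiv0$, so Corollary~\ref{cor-vanish-pre} forces $P_F\not\equiv0$ --- no expansion in $t$ needed. If $A$ is diagonal with two distinct unimodular entries $\lambda_i\neq\lambda_j$, it takes $g(z)=\tfrac12(z_i^2e_j+z_j^2e_i)$ and computes the displayed product explicitly, finding entries proportional to $\lambda_i\overline{\lambda_j}-1\neq0$. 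If you want to salvage your version, derive the displayed identity carefully, and then either adopt this two-case strategy or make precise the family of $g$'s realizing an arbitrary pair $\big(\omega(z_0),D\omega(z_0)\big)$; as it stands the proof has a genuine gap at its central step.
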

%%%%%%%%%%%%%%%%

\begin{proof} 
The dilatation of $F$ is given by $\om_F=\overline{A} \om$. Hence, in view of definition \eqref{def-pre-S-form}, it is evident that $P_F=P_f$ if $A$ is a rotation of the identity.

For the converse we give counterexamples in the polydisk $\Om=\D^n$. Assume first that the matrix $A=(a_{ij})_{ij}$ has at least one non-zero entry off the diagonal, that is, $a_{ij}\neq0$ for some $i\neq j$. We consider the mapping $f=h+\overline{g}$ for which $h(z)=z$ and $g(z)=\frac{1}{2}z_i^2 e_j$. We see that $\om = z_i E_{ji}$ satisfies $\|\om\| = |z_i|<1$ and, moreover, that $\om\overline{\om}=0$, so that $P_f=0$. The dilatation of the transformed mapping $F$ is given by $\om_F \, = \, \overline{A} \om \, = \, z_i  M$, where $M$ is the matrix with non-zero elements only in the $i$-th column $C_i= (\overline{a_{1j}}, \ldots, \overline{a_{nj}})$. Since $\om_F(0)=0$ and $\om_F\overline{\om_F}=a_{ij}\overline{z_i}\om_F$ is not zero, Corollary \ref{cor-vanish-pre} can be applied to deduce that $P_F$ is not zero and, therefore, distinct from $P_f$. 

For the remaining case we may assume that $A$ is a diagonal matrix with entries $\la_k\in\T, k=1,2,\ldots,n$, and such that $\la_i\neq\la_j$ for some $i<j$. We first prove that, in general, $P_F=P_f$ is equivalent to 
\begin{equation} \label{stable-equiv-formula}
( A \overline{\om} \overline{A} -\overline{\om}) (I_n- \om\overline{\om})^{-1} D\om \, = \, 0.
\end{equation}
Indeed, in view of \eqref{def-pre-S-form}, $P_F=P_f$ is equivalent to 
$$
\big(I_n - A\overline{\om}\overline{A}\om \big)^{-1} \,A \overline{\om} \overline{A} \, D\om  \, =\, \big(I_n - \overline{\om}\om \big)^{-1} \, \overline{\om} \, D\om, 
$$
which is the same as
\begin{align*}
0 \, = \, & \big[ A \overline{\om} \overline{A} -\big(I_n - A\overline{\om}\overline{A}\om \big) \big(I_n - \overline{\om}\om \big)^{-1} \, \overline{\om}\, \big]  D\om \\
= \, & \big[ A \overline{\om} \overline{A} -\big(I_n - A\overline{\om}\overline{A}\om \big) \overline{\om} \big(I_n - \om \overline{\om} \big)^{-1} \big]  D\om \\
= \, & \big[ A \overline{\om} \overline{A} \big(I_n - \om \overline{\om} \big) -\big(I_n - A\overline{\om}\overline{A}\om \big) \overline{\om} \,  \big] \big(I_n - \om \overline{\om} \big)^{-1} D\om \\
= \, & ( A \overline{\om} \overline{A} -\overline{\om}) (I_n- \om\overline{\om})^{-1} D\om,
\end{align*}
which proves our claim \eqref{stable-equiv-formula}. We consider the mapping $f=h+\overline{g}$ with $h(z)=z$ and $g(z)=\frac{1}{2} (z_i^2 e_j + z_j^2 e_i)$. We compute 
$$
\om \, = \,  \kbordermatrix{
         & C_i &   & C_j    \\
    R_i &  &  &  z_j  \\
      &  &  &   \\
    R_j &  z_i &  &        }  \qquad \text{and} \qquad D\om \langle u , v\rangle \, = \,  \kbordermatrix{
         & C_i &   & C_j    \\
    R_i &  &  &  u_j  \\
      &  &  &   \\
    R_j &  u_i &  &       } v. 
$$
Moreover, we find that $\om\overline{\om}=\overline{z_i}z_j E_{ii} + z_i\overline{z_j}E_{jj}$, so that $I_n-\om\overline{\om} $ is a diagonal matrix which we can easily invert to obtain 
$$
(I_n-\om\overline{\om})^{-1}   \, = \,  \kbordermatrix{
       & &  & & C_i &  && & C_j  & && \\
       &1&  & &      &  &&  &      &  &  & \\
       & &\ddots & &   &&   &    &      &  &  & \\
       & & & 1 &      &  &&  &      &  &  & \\
  R_i & & &&  \frac{1}{1-\overline{z_i}z_j} &  &&    &  &&& \\
       & &  & &     &1&       &    &&& &\\
       & &  & &     &  &  \ddots    &   &&& &\\     
       & &  & &     &  &                & 1  &&& &\\
  R_j & & &&    &&& &  \frac{1}{1-z_i\overline{z_j}}     && &  \\  
        & & &  &    &&&  &    &      1 &     & \\
       &  &&  &    &  & &&   &     &  \ddots &   \\
        & & &  &  &    &  && &         &      &1 }.    
$$
Finally, we have 
$$
A \overline{\om} \overline{A} -\overline{\om} \, = \,   \kbordermatrix{
         & C_i &   & C_j    \\
    R_i &  &  &  (\la_i\overline{\la_j}-1)\overline{z_j}  \\
      &  &  &   \\
    R_j &  (\overline{\la_i}\la_j-1)\overline{z_i}  &  &        }, 
$$
with which we conclude that 
$$
( A \overline{\om} \overline{A} -\overline{\om}) (I_n- \om\overline{\om})^{-1} D\om\langle u , v\rangle  \, =  \,   \kbordermatrix{
         & C_i &   & C_j    \\
    R_i &   \frac{(\la_i\overline{\la_j}-1)u_i\overline{z_j}}{1-z_i\overline{z_j}}  &  &  \\
      &  &  &   \\
    R_j &   &  &  \frac{(\overline{\la_i}\la_j-1) u_j\overline{z_i}  }{1-\overline{z_i}z_j}       }, 
$$
Since this is not zero we get that $P_f$ and $P_F$ are distinct in view of \eqref{stable-equiv-formula}. 
\end{proof}
%%%%%%%%%%%%%%%%

\vskip.5cm
%%%%%%%%%%%%%%%%%%%%%%%%%
%%%%%%%%%%%%%%%%%%%%%%%%%
\section{Schwarzian derivative} \label{sect-3}

%%%%%%%%%%%%%%%%%%%%%%%%%
\subsection{The Schwarzian derivative for holomorphic mappings in $\C^n$} For a locally biholomorphic mapping $f$ in a domain $\Om\subset\C^n$, Oda \cite{Od74} defined the Schwarzian derivatives 
\begin{equation} \label{Oda}
S_{ij}^kf \, = \, \sum_{\ell=1}^n \frac{\partial^2 f_\ell }{\partial z_i \partial z_j} \frac{\partial z_k}{\partial f_\ell} - \frac{1}{n+1} \left( \de_{ik}  \frac{\partial}{\partial z_j} +\de_{jk}  \frac{\partial}{\partial z_i} \right) \log J_f, \qquad 
\end{equation}
for $i,j,k=1,\ldots,n$; here $\de_{ij}$ is Kronecker's delta. These differential operators satisfy a certain chain rule formula and, also, in dimension $n\geq2$ they all vanish only for M\"obius transformations, \textit{i.e.}, for the mappings 
\begin{equation} \label{Moebius-n}
T(z) \,= \, \left( \frac{\ell_1(z)}{\ell_0(z)}, \ldots,  \frac{\ell_n(z)}{\ell_0(z)} \right), \qquad z\in\C^n, 
\end{equation}
where $\ell_i(z)=a_{i0} + a_{i1}z_1+\ldots + a_{in}z_n, \, i=0,\ldots,n$, with ${\rm det}(a_{ij})_{ij}\neq 0$. Note that, in contrast to the planar case, these are differential operators of order 2. This is explained by the fact that in the plane the order of the M\"obius group is 3 and, therefore, any M\"obius-invariant operator should have order at least 3. However, in dimension $n\geq2$ the parameters involved in the value, the first and the second derivatives of $f$ are $n^2(n+1)/2+n^2+n$, already exceeding the order of the M\"obius group which is $n^2+2n$. The difference of the two orders is $n(n-1)(n+2)/2$ and this is precisely the number of independent terms $S_{ij}^kf$, which can be counted considering that they satisfy  $S_{ij}^kf  = S_{ji}^kf$ for all $k$ and $\sum_{j=1}^nS_{ij}^j f =0$. 

\vskip.15cm

With the matrices $\SS^kf= \big( S_{ij}^kf \big)_{ij}$, for $k=1,\ldots,n$, the third author \cite{He06} defined the symmetric bilinear operator
\begin{equation} \label{def schw op}
Sf(z)\langle u,v\rangle \,  = \, \big( u^t\SS^1f(z) v, \ldots,  u^t\SS^n f(z) v\big), \qquad z\in\Om, \; u,v\in\C^n;  
\end{equation}
here $u^t$ denotes the transpose of the vector $u$. A straightforward calculation then produces the expression 
\begin{equation}  \label{def-Schw-n-holom}
Sf(z)\langle u,v\rangle = Pf(z)\langle u,v\rangle - \frac{1}{n+1}   \Big( \big( \nabla\log J_f(z) \cdot u \big)v  +\big( \nabla\log J_f(z) \cdot v \big)u \Big), 
\end{equation}
where $Pf$ is the pre-Schwarzian derivate \eqref{Pfhol} and $\nabla=(\partial/\partial z_1, \ldots , \partial/\partial z_n)$ is the complex gradient operator. A way to verify that \eqref{def schw op} and \eqref{def-Schw-n-holom} are equivalent is to consider the vectors $u=e_i$ and $v=e_j$ of the standard basis and see that in this case both expressions are equal to $\big(S_{ij}^1f(z),\ldots ,S_{ij}^nf(z)\big)$.

%%%%%%%%%%%%%%%%%%%%%%%%%
\subsection{Definition of $S_f$ for pluriharmonic mappings} For $f=h+\overline{g}$ in $\cp(\Om)$, the class of pluriharmonic mappings given in \eqref{class-loc-univ-pluri}, we define the Schwarzian derivative $S_f$ in complete analogy to \eqref{def-Schw-n-holom}, employing the pre-Schwarzian derivate \eqref{def-pre-S-form} and the Jacobian \eqref{Jacob-pluri} of $f$. We now find two equivalent formulations of this definition. With a brief calculation (and suppressing the variable $z\in\Om$) we get that 	
\begin{align}  \label{def-Schw-n-pluri}
S_f\langle u,v\rangle \, =  \, Sh \langle & u,v \rangle - Dh^{-1} (I_n - \overline{\om}\om )^{-1} \, \overline{\om} \, D\om\langle u,Dh\, v \rangle  \\ 
-\frac{1}{n+1} &\bigg( \Big\{ \nabla\log \big[{\rm det} (I_n - \om\overline{\om} ) \big] \cdot u \Big\}v  +\Big\{ \nabla\log \big[{\rm det} (I_n - \om\overline{\om} ) \big] \cdot v \Big\}u \bigg).  \nonumber 
\end{align}
Also, using Jacobi's formula for the derivative of a determinant (see \cite{Go72}), we get from \eqref{Jacob-pluri} that 
$$
\frac{\partial}{\partial z_k} \left( \log J_f \right) \, =  \,  {\rm Tr}\left( Dh^{-1} \frac{\partial (Dh)}{\partial z_k} \right) - {\rm Tr}\left( (I_n - \om\overline{\om} )^{-1} \frac{\partial \om}{\partial z_k} \,\overline{\om}  \right), 
$$
where  ${\rm Tr}(\cdot)$ denotes trace. Observe that the linear mappings $D^2h\langle e_k,\cdot \rangle$ and $D\om\langle e_k,\cdot \rangle$ can be interpreted as the matrices $\partial (Dh)/\partial z_k$ and $\partial \om / \partial z_k$, respectively. Using standard properties of the trace we get 
\begin{align*} 
\frac{\partial}{\partial z_k} \left( \log J_f \right) \, = & \,  {\rm Tr}\left( Dh^{-1} D^2h\langle e_k,\cdot \rangle \right) - {\rm Tr}\left( \overline{\om}  (I_n - \om\overline{\om} )^{-1} D\om\langle e_k,\cdot \rangle\,\right) \\
= & \,  {\rm Tr}\left( Ph \langle e_k,\cdot \rangle \right) - {\rm Tr}\left( Dh^{-1}  (I_n - \overline{\om}\om )^{-1} \overline{\om} D\om\langle e_k,Dh \cdot \rangle\,\right) \\
= & \,  {\rm Tr}\left( P_f \langle e_k,\cdot \rangle \right).
\end{align*}
Therefore, once again from \eqref{def-Schw-n-holom}, we obtain
\begin{align} \label{def-Schw-n-pluri-alt}
 S_f\langle u,v\rangle & \,  = \, P_f\langle u,v\rangle \\ 
& -\frac{1}{n+1} \Bigg( \bigg\{ \Big[ \sum_{k=1}^n {\rm Tr} \big(P_f\langle e_k , \cdot \rangle\big) e_k \Big] \cdot u \bigg\}v  +\bigg\{ \Big[ \sum_{k=1}^n {\rm Tr} \big(P_f\langle e_k , \cdot \rangle\big) e_k \Big] \cdot v \bigg\}u  \Bigg),  \nonumber 
\end{align}
which shows that $S_f$ can be written only in terms of $P_f$. 

%%%%%%%%%%%%%%%%%%%%%%%%%
\subsection{Basic theorems}  In view of formula \eqref{def-Schw-n-pluri-alt} the following theorem is a direct consequence of the corresponding theorems on the pre-Schwarzian. 

\begin{thm} Let $f=h+\overline g$ be a pluriharmonic mapping in $\Om$ having dilatation $\omega$. Then 

\vskip.2cm
\begin{enumerate}
\item[(i)]$S_{f+A\overline f}=S_f$ for every matrix $A$ with $\|A\|<1$, \\

\item[(ii)]$S_{Bf}=S_f$ for every invertible matrix $B$, \\

\item[(iii)]$S_{h+\lambda \overline g}=S_f$ for every complex number $\lambda$ with $|\lambda|=1$, \\

\item[(iv)]$S_f(z_0)=S(h-\overline{\omega(z_0)}g)(z_0)$ for any fixed $z_0\in\Omega$.
\end{enumerate}
\end{thm}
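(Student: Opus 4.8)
The plan is to derive all four statements directly from formula \eqref{def-Schw-n-pluri-alt}, which expresses $S_f$ purely in terms of $P_f$ in a way that is ``natural'' with respect to linear structure. The key observation is that \eqref{def-Schw-n-pluri-alt} has the shape
$$
S_f\langle u,v\rangle \, = \, P_f\langle u,v\rangle - \tfrac{1}{n+1}\big( (\tau_f\cdot u)v + (\tau_f\cdot v)u \big),
$$
where $\tau_f = \sum_{k=1}^n {\rm Tr}\big(P_f\langle e_k,\cdot\rangle\big)\,e_k$ depends only on $P_f$. So it suffices to check that each of the four transformations of $f$ leaves $P_f$ unchanged (which is already known) and hence leaves $\tau_f$ unchanged, whence $S_f$ is unchanged as well.

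Concretely: for (i) and (ii), Theorem~\ref{harm-afin-inv} gives $P_{f+A\overline f}=P_f$ and Theorem~\ref{harm-mult-inv} gives $P_{Bf}=P_f$; substituting into \eqref{def-Schw-n-pluri-alt} and noting that $\tau$ is built only from $P_f$, the identities $S_{f+A\overline f}=S_f$ and $S_{Bf}=S_f$ follow immediately. For (iii), one applies Theorem~\ref{stable-thm}: a diagonal scalar matrix $\lambda I_n$ with $|\lambda|=1$ is a rotation of the identity, so $P_{h+\lambda\overline g}=P_f$, and again $S_{h+\lambda\overline g}=S_f$. Alternatively (iii) is just the special case $B=\lambda I_n$ of the multiplicative invariance applied after observing $h+\lambda\overline g = (\lambda^{-1})\cdot$ (nothing)---but the cleanest route is via Theorem~\ref{stable-thm}, or by noting it is the case $A=\bar\lambda I_n$ acting only on $g$, which is exactly the stable transformation with unitary matrix $\lambda I_n$.

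For (iv), I would invoke Lemma~\ref{pre-S-lem}: $P_f(z_0)=P\big(h-\overline{\om(z_0)}g\big)(z_0)$. Since $\tau_f(z_0)$ depends only on $P_f(z_0)$ by its defining formula, we also get $\tau_f(z_0) = \tau_{h-\overline{\om(z_0)}g}(z_0)$ where on the right $\tau$ is computed from the holomorphic pre-Schwarzian $P$ of the (holomorphic) mapping $h-\overline{\om(z_0)}g$ evaluated at $z_0$. But for a holomorphic mapping $\phi$, the analog of $\tau$ computed from $P\phi$ is exactly $\nabla\log J_\phi$, by the trace computation carried out just before \eqref{def-Schw-n-pluri-alt} (applied with $\om\equiv 0$, i.e.\ ${\rm Tr}(P\phi\langle e_k,\cdot\rangle) = \partial_{z_k}\log J_\phi$). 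Hence the right-hand side of \eqref{def-Schw-n-pluri-alt} at $z_0$ coincides with the right-hand side of \eqref{def-Schw-n-holom} for $\phi = h-\overline{\om(z_0)}g$ at $z_0$, which is precisely $S(h-\overline{\om(z_0)}g)(z_0)$.

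The only mild subtlety---and the step I expect to need the most care---is (iv), where one must be careful that the bilinear object $\tau_f$ defined via $P_f$ genuinely matches $\nabla\log J$ for the comparison holomorphic map at the single point $z_0$; this is exactly the content of the displayed trace identity ${\rm Tr}(P_f\langle e_k,\cdot\rangle)=\partial_{z_k}\log J_f$ established above, specialized to the holomorphic setting, so no new computation is required---one simply quotes that identity. Parts (i)--(iii) are then entirely formal consequences of the already-proven invariance of $P_f$ together with the structural formula \eqref{def-Schw-n-pluri-alt}.
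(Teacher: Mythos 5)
Your proposal is correct and takes essentially the same route as the paper: formula \eqref{def-Schw-n-pluri-alt} expresses $S_f(z)$ as a function of $P_f(z)$ alone, so parts (i)--(iv) follow immediately from Theorem~\ref{harm-afin-inv}, Theorem~\ref{harm-mult-inv}, Theorem~\ref{stable-thm} and Lemma~\ref{pre-S-lem}, respectively. The point you flag as needing care in (iv) --- that ${\rm Tr}\big(P\phi\langle e_k,\cdot\rangle\big)=\partial_{z_k}\log J_\phi$ for the holomorphic comparison map, so that \eqref{def-Schw-n-pluri-alt} at $z_0$ reduces to \eqref{def-Schw-n-holom} --- is exactly the content of the trace computation the paper carries out just before \eqref{def-Schw-n-pluri-alt}, so no new argument is needed.
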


\vskip.15cm
%%%%%%%%%%%%%%%%
\begin{proof} 
Statements $(i), (ii), (iii)$ and $(iv)$ follow directly from Theorem~\ref{harm-afin-inv}, Theorem~\ref{harm-mult-inv}, Theorem~\ref{stable-thm} and Lemma~\ref{pre-S-lem}, respectively. 
\end{proof}

%%%%%%%%%%%%%%%%
\begin{thm}[Chain rule for $S_f$]
Let $f=h+\overline{g}$ be a pluriharmonic mapping and $\varphi$ be a locally biholomorphic mapping such that the composition $f\circ\varphi$ is well defined. Then 
$$
S_{f\circ\varphi}(z)\langle \cdot,\cdot \rangle \, = \, D\varphi(z)^{-1} S_f\big(\varphi(z)\big) \langle D\varphi(z)\, \cdot, D\varphi(z) \, \cdot \rangle +S\varphi(z) \langle\cdot,\cdot\rangle 
$$
for all $z$ in the domain of definition of $\varphi$. 
\end{thm}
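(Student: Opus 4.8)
The strategy is to reduce the pluriharmonic statement to the already-established chain rule for $P_f$ (Theorem~\ref{chain-rule-pre-Schw}) together with the holomorphic chain rule for the Schwarzian operator $S$ of Oda/Hern\'andez, by exploiting the formula \eqref{def-Schw-n-pluri-alt} which expresses $S_f$ purely in terms of $P_f$. Write $F=f\circ\varphi = H+\overline G$ with $H=h\circ\varphi$, $G=g\circ\varphi$, and recall from the proof of Theorem~\ref{chain-rule-pre-Schw} that $\om_F=\om\circ\varphi$ and
$$
P_{F}(z)\langle\cdot,\cdot\rangle \, = \, D\varphi(z)^{-1}P_f\big(\varphi(z)\big)\langle D\varphi(z)\cdot, D\varphi(z)\cdot\rangle + P\varphi(z)\langle\cdot,\cdot\rangle.
$$
The goal is to substitute this into \eqref{def-Schw-n-pluri-alt} for $S_F$ and check that the right-hand side collapses to $D\varphi^{-1}S_f(\varphi)\langle D\varphi\cdot, D\varphi\cdot\rangle + S\varphi$.

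The computation has two pieces. First I would handle the ``trace-gradient'' vector $\tau_f(z) := \sum_{k=1}^n {\rm Tr}\big(P_f(z)\langle e_k,\cdot\rangle\big)e_k$, which is nothing but $\nabla\log J_f$ as shown in the paragraph preceding \eqref{def-Schw-n-pluri-alt}. Using $J_F(z)=J_f(\varphi(z))\,|{\rm det}D\varphi(z)|^2$ from \eqref{Jacob-pluri} (since $\om_F=\om\circ\varphi$ and $DH=Dh(\varphi)D\varphi$), the holomorphic chain rule for the complex gradient gives
$$
\tau_F(z) \, = \, D\varphi(z)^{t}\,\tau_f\big(\varphi(z)\big) \, + \, \nabla\log|{\rm det}D\varphi(z)|^2,
$$
and the second summand is exactly $\tau_\varphi(z)=\nabla\log J_\varphi(z)$ for the holomorphic map $\varphi$. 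Plugging $P_F$ and $\tau_F$ into \eqref{def-Schw-n-pluri-alt}, the $P_f(\varphi)$ and $\tau_f(\varphi)$ contributions reassemble, after accounting for the $D\varphi$-conjugation, into $D\varphi^{-1}S_f(\varphi)\langle D\varphi\cdot,D\varphi\cdot\rangle$, while the $P\varphi$ and $\tau_\varphi$ contributions reassemble into the holomorphic Schwarzian $S\varphi$ via the equivalence of \eqref{def-Schw-n-holom} and its trace form. The cross terms — one piece of $P_f(\varphi)$ paired with $\tau_\varphi$, or $P\varphi$ paired with $\tau_f(\varphi)$ — are precisely what is needed; here one must use that ${\rm Tr}\big(P\varphi(z)\langle e_k,\cdot\rangle\big)e_k$ summed over $k$ equals $\nabla\log J_\varphi$, and track the bookkeeping of which slots carry $D\varphi$.

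Alternatively — and this is probably the cleaner route — one can bypass \eqref{def-Schw-n-pluri-alt} entirely and argue as in the proofs of Theorems~\ref{harm-mult-inv} and~\ref{harm-afin-inv}: by Lemma~\ref{pre-S-lem} and its Schwarzian analogue (part (iv) of the previous theorem), for a fixed $z_0$ we have $S_F(z_0)=S\big(H-\overline{\om_F(z_0)}G\big)(z_0)$, and $H-\overline{\om_F(z_0)}G = \big(h-\overline{\om(\varphi(z_0))}g\big)\circ\varphi$ since $\om_F(z_0)=\om(\varphi(z_0))$. Applying the \emph{holomorphic} chain rule for $S$ (Oda \cite{Od74}, Hern\'andez \cite{He06}) to this composition and then using $S_f(\varphi(z_0))=S\big(h-\overline{\om(\varphi(z_0))}g\big)(\varphi(z_0))$ again by part (iv), one obtains the claim at $z_0$; since $z_0$ is arbitrary, done.

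The main obstacle is the same one that appears in the holomorphic case: one needs the chain rule for the holomorphic Schwarzian operator $S$ in $\C^n$ in the bilinear-operator formulation, of the shape $S(g\circ\varphi)\langle\cdot,\cdot\rangle = D\varphi^{-1}Sg(\varphi)\langle D\varphi\cdot,D\varphi\cdot\rangle + S\varphi\langle\cdot,\cdot\rangle$. This is implicit in \cite{Od74,He06} but I would either cite it explicitly or derive it quickly from the holomorphic pre-Schwarzian chain rule \eqref{holo-mult-inv}-style manipulation combined with the gradient transformation law for $\log J$; the bookkeeping of placeholders and the symmetry of $D^2\varphi$ is the only delicate point, and everything else is a substitution.
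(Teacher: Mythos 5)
Your first route is essentially the paper's proof: the authors substitute the pre-Schwarzian chain rule (Theorem~\ref{chain-rule-pre-Schw}) into the definition \eqref{def-Schw-n-holom} of $S_f$, use $J_{f\circ\varphi}=(J_f\circ\varphi)J_\varphi$, and finish by verifying the gradient transformation $\nabla(\log J_f\circ\varphi)(z)\cdot u=\nabla(\log J_f)(\varphi(z))\cdot(D\varphi(z)u)$ --- exactly your $\tau$-computation (note there are no genuine ``cross terms'' to worry about, since \eqref{def-Schw-n-pluri-alt} is linear in $P_f$ and in $\tau_f$ separately, so the $f$-part and the $\varphi$-part decouple immediately). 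Your alternative route via part (iv) and the identity $H-\overline{\om_F(z_0)}G=\bigl(h-\overline{\om(\varphi(z_0))}g\bigr)\circ\varphi$ is also sound, but it presupposes the holomorphic chain rule for $S$ in the bilinear form, which the paper never states separately and which would have to be proved by the same gradient-plus-$P$ computation anyway, so it offers no real savings.
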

%%%%%%%%%%%%%%%%
\begin{proof}
In view of the chain rule for the pre-Schwarzian given in Theorem~\ref{chain-rule-pre-Schw} and the formula 
$$
J_{f\circ\phi}(z) \, = \, J_f\big(\phi(z)\big) \, J_\phi(z)
$$
satisfied by the Jacobian of a composition, our definition \eqref{def-Schw-n-holom} readily yields 
\begin{align*} 
S_{f\circ\phi}(z)\langle u,v\rangle & = S\phi(z)\langle u,v\rangle +  D\varphi(z)^{-1} P_f\big(\varphi(z)\big) \langle D\varphi(z) u, D\varphi(z) v \rangle \\
& - \frac{1}{n+1}   \Big( \big( \nabla(\log J_f\circ\phi)(z) \cdot u \big)v  +\big( \nabla(\log J_f\circ\phi)(z) \cdot v \big)u \Big).
\end{align*}
To get the desired result it suffices to verify that 
$$
\nabla(\log J_f\circ\phi)(z) \cdot u \, = \, \nabla(\log J_f)\big(\phi(z)\big) \cdot \big(D\phi(z)u\big)
$$
for the vectors $u=e_k$ of the standard basis or, equivalently, that
$$
\frac{\partial\left( J_f \circ\phi \right)}{\partial z_k} (z)  \, = \, \sum_{j=1}^n \frac{\partial J_f}{\partial z_j} \left( \phi(z) \right)  \frac{\partial \phi_j}{\partial z_k}(z),
$$
which is evident. 
\end{proof}
%%%%%%%%%%%%%%%%

It is easy to see in \eqref{def-Schw-n-pluri} that if the dilatation $\om$ is constant then $S_f=Sh$ and, furthermore, that if $f=T + A \, \overline{T}$ for a M\"obius transformation $T$ as in \eqref{Moebius-n} and a matrix $A$ with $\|A\|<1$ then $S_f=ST=0$. In the converse direction we have the following theorem. 

\begin{thm} \label{analytic-Schw}
Let $f=h+\overline g\in \cp(\Om)$ be such that $\omega(z_0)=0$ for some $z_0\in\Omega$, and let also $\vp$ be a holomorphic mapping in $\Omega$. Then $S_f =\vp $ implies that $Sh =\vp$.
\end{thm}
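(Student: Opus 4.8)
The plan is to start from the two equivalent expressions \eqref{def-Schw-n-pluri} and \eqref{def-Schw-n-pluri-alt} for $S_f$ and extract the anti-analytic information. Since $\vp$ is holomorphic, applying $\overline{D}$ to the identity $S_f=\vp$ kills the right-hand side, and we are left with a purely anti-analytic constraint. From \eqref{def-Schw-n-pluri-alt} we may equivalently work with the pre-Schwarzian: writing $v(z)=\sum_k {\rm Tr}\big(P_f\langle e_k,\cdot\rangle\big) e_k$, the relation $S_f=\vp$ reads
$$
P_f\langle u,w\rangle-\tfrac{1}{n+1}\big((v\cdot u)w+(v\cdot w)u\big)=\vp\langle u,w\rangle.
$$
First I would take the trace in the second slot: since the trace of $(v\cdot u)w$ as an operator in $w$ is $v\cdot u$ (it is the rank-one map $w\mapsto (v\cdot u)w$ only after fixing $u$; more carefully ${\rm Tr}\,[w\mapsto (v\cdot u)w]=n(v\cdot u)$ while ${\rm Tr}\,[w\mapsto (v\cdot w)u]=v\cdot u$), I get a linear relation expressing $v\cdot u$ in terms of ${\rm Tr}(\vp\langle u,\cdot\rangle)$ and ${\rm Tr}(P_f\langle u,\cdot\rangle)=v\cdot u$; this shows $v$ itself is (a holomorphic multiple away from) the trace vector of $\vp$, hence $v$ is holomorphic. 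Plugging back, $P_f$ differs from a holomorphic bilinear map by the holomorphic expression $\tfrac{1}{n+1}((v\cdot u)w+(v\cdot w)u)$, so $P_f$ is holomorphic.

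Now Theorem~\ref{vanish-pre-S} applies: since $\om(z_0)=0$ and $P_f$ is holomorphic, we conclude $\om\overline\om\equiv0$ throughout $\Om$. This is the crucial structural consequence. With $\om\overline\om\equiv0$, differentiation (after conjugation) gives $\overline\om D\om\equiv0$, so by \eqref{def-pre-S-form} we have $P_f=Ph$; likewise ${\rm det}(I_n-\om\overline\om)\equiv1$ forces $\nabla\log[{\rm det}(I_n-\om\overline\om)]\equiv0$, so the last group of terms in \eqref{def-Schw-n-pluri} vanishes and the middle term vanishes too. Hence \eqref{def-Schw-n-pluri} collapses to $S_f=Sh$, and therefore $Sh=S_f=\vp$, which is the assertion.

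The main obstacle I anticipate is the first paragraph: carefully tracking how taking traces in \eqref{def-Schw-n-pluri-alt} lets one solve for the trace vector and conclude that $P_f$ is holomorphic from the mere fact that $S_f$ is. One must check the linear algebra of the $\tfrac{1}{n+1}$ correction is invertible — i.e. that the map $P_f\mapsto S_f$ can be inverted to write $P_f$ in terms of $S_f$ and the trace terms of $S_f$ — which is exactly the content of the inversion hidden in passing between \eqref{def-Schw-n-holom} and \eqref{def-Schw-n-pluri-alt}; in dimension $n\ge1$ the relevant scalar is $\tfrac{n-1}{n+1}$ or similar, and one should confirm it never causes a degeneracy (the case $n=1$ being the classical planar situation, already understood). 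Once holomorphy of $P_f$ is in hand, the rest is a direct appeal to Theorem~\ref{vanish-pre-S} together with the elementary simplifications noted just before the theorem statement, so the remainder of the argument is routine.
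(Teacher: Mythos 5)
The first paragraph of your argument has a fatal gap. You propose to recover ${\rm Tr}\big(P_f\langle u,\cdot\rangle\big)=v\cdot u$ by tracing the identity
$$
\vp\langle u,w\rangle \,=\, P_f\langle u,w\rangle-\tfrac{1}{n+1}\big((v\cdot u)w+(v\cdot w)u\big)
$$
over $w$. But with the very trace values you quote, the right-hand side traces to $(v\cdot u)-\tfrac{1}{n+1}\big(n(v\cdot u)+(v\cdot u)\big)=0$: the scalar you hoped would be something like $\tfrac{n-1}{n+1}$ is exactly $0$. This is no accident --- the factor $\tfrac{1}{n+1}$ in \eqref{def-Schw-n-holom} is chosen precisely so that the Schwarzian is traceless (this is the relation $\sum_{j}S_{ij}^jf=0$ recorded after \eqref{Oda}). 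The traced equation therefore reads ${\rm Tr}(\vp\langle u,\cdot\rangle)=0$ and carries no information about $v$; the linear map $P_f\mapsto S_f$ has a nontrivial kernel (all bilinear maps of the form $(V\cdot u)w+(V\cdot w)u$) and cannot be inverted. So you cannot conclude that $v$, hence $P_f$, is holomorphic, and the subsequent appeal to Theorem~\ref{vanish-pre-S} --- and with it the conclusion $\om\overline{\om}\equiv0$, which is strictly stronger than anything the theorem asserts and is not established in the paper either --- is unsupported. The final step (if $\om\overline{\om}\equiv0$ then $P_f=Ph$, the determinant term is constant and $S_f=Sh$) is fine, but it rests entirely on the broken first step.

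The paper's proof sidesteps all of this by working only at the distinguished point $z_0$: every term of $S_f-Sh$ in \eqref{def-Schw-n-pluri} retains a right factor of $\overline{\om}$ under repeated holomorphic differentiation (since $D\overline{\om}=0$), so all derivatives $D^m(S_f-Sh)$ vanish at $z_0$ where $\om(z_0)=0$; as $S_f=\vp$ and $Sh$ are both holomorphic on the domain $\Om$, the identity principle then gives $Sh=\vp$. To repair your argument you would need either this pointwise Taylor-coefficient comparison, or an independent proof that the specific expression for $\overline{D}P_f$ computed in the proof of Theorem~\ref{vanish-pre-S} cannot lie in the kernel of the trace-correction unless it vanishes --- a genuinely new claim that would itself require proof.
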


\begin{proof} Observe that 
$$
\frac{\partial}{\partial z_i}\log \big[{\rm det}\big(I_n - \om\overline{\om} \big) \big] =\frac{1}{{\rm det}\big(I_n - \om\overline{\om} \big)}\sum_{k=1}^n{\rm det}\big(F_1,\ldots, \frac{\partial F_k}{\partial z_i},\ldots,F_n\big),
$$ 
where $F_k$'s are the rows of $I_n - \om\overline{\om}$. But $F_k=e_k-\om_k \overline\om$, where $\om_k$ is the $k$-row of $\om$, thus 
$$
\frac{\partial}{\partial z_i}\log \big[{\rm det}\big(I_n - \om\overline{\om} \big) \big] =\frac{1}{{\rm det}\big(I_n - \om\overline{\om} \big)}\sum_{k=1}^n{\rm det}\big(e_1-\om_1 \overline\om,\ldots, -\frac{\partial \om_k}{\partial z_i}\overline\om,\ldots,e_n-\om_n\overline\omega\big).
$$ 
Hence this expression vanishes at $z_0$ since $\omega(z_0)=0$. Moreover, differentiation with respect to $z_j, 1 \leq j \leq n$, of each row in the determinant in the summation above will leave the term $\overline{\om}$ intact, that is, 
$$
\frac{\partial}{\partial z_j} \left( - \frac{\partial \om_k}{\partial z_i}\overline\om   \right) =  - \frac{\partial^2 \om_k}{\partial z_j \partial z_i }\overline\om \qquad \text{and} \qquad \frac{\partial}{\partial z_j} (e_\ell-\om_\ell \overline\om) =- \frac{\partial \om_\ell}{\partial z_j}\overline\om, \quad \ell\neq k. 
$$
Therefore, we have that  
\begin{equation} \label{nabla det log}
\frac{\partial^m {\rm det} \big(e_1-\om_1 \overline\om,\ldots, -\frac{\partial \om_k}{\partial z_i}\overline\om,\ldots,e_n-\om_n\overline\omega\big) }{ \partial z_1^{m_1}\cdots \partial z_n^{m_n} }  (z_0) =0, \quad m\geq0, 
\end{equation}  
where $m=m_1+\ldots+m_n$, again because $\omega(z_0)=0$. 

On the other hand, we can rewrite $D^n\big(Dh^{-1}(I_n-\overline\om\om)^{-1}\overline\om D\om Dh\big)$ as 
$$
\sum_{k=0}^n {n \choose k} D^{n-k}\big(Dh^{-1}(I_n-\overline\om\om)^{-1}\big)\, \overline\om \, D^k( D\om Dh).
$$ 
Hence, 
\begin{equation}\label{Derivada n}  
D^n\big(Dh^{-1}(I_n-\overline\om\om)^{-1}\overline\om D\om Dh\big)(z_0) =0. 
\end{equation} 
Now, using the equations (\ref{def-Schw-n-pluri}), (\ref{nabla det log}), and (\ref{Derivada n}) we have that  $D^n\vp(z_0)=D^nS_f(z_0)=D^nSh(z_0)$ for all $n\geq 0$ and, therefore, $Sh=\vp$.
\end{proof}
%%%%%%%%%%%%%%%%

As a direct consequence of Theorem~\ref{analytic-Schw} for $\vp\equiv0$, and in view of the fact that the Schwarzian derivative \eqref{def schw op} vanishes only for M\"obius transformations \eqref{Moebius-n}, we get the following proposition. 

\begin{cor}
Let $f=h+\overline g\in \cp(\Om)$ be such that $\omega(z_0)=0$ for some $z_0\in\Omega$. Then $S_f \equiv 0$ implies that $h$ is a M\"obius transformation.
\end{cor}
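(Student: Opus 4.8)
The plan is to deduce this corollary from Theorem~\ref{analytic-Schw} applied with $\vp\equiv0$. The hypothesis gives $S_f\equiv0$, and since $\om(z_0)=0$ for some $z_0\in\Om$, Theorem~\ref{analytic-Schw} immediately yields $Sh\equiv0$ on $\Om$, where $Sh$ is the holomorphic Schwarzian derivative \eqref{def schw op} (equivalently \eqref{def-Schw-n-holom}).

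Next I would invoke the characterization of the kernel of the holomorphic Schwarzian recalled just before the statement: by Oda's result (as discussed after \eqref{Oda}), in dimension $n\geq2$ the vanishing of all the operators $S^k_{ij}h$ — equivalently, the vanishing of the bilinear operator $Sh$ defined in \eqref{def schw op} — holds precisely for the M\"obius transformations \eqref{Moebius-n}. Applying this to $h$, which is locally biholomorphic in $\Om$ by the standing assumption $J_h\neq0$ built into the class $\cp(\Om)$, we conclude that $h$ is a M\"obius transformation of the form \eqref{Moebius-n}. For $n=1$ the statement is vacuous or trivial in the sense that $Sh=0$ classically characterizes M\"obius maps as well, so no separate argument is needed.

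There is essentially no obstacle here: the corollary is a formal consequence of the already-established Theorem~\ref{analytic-Schw} and the quoted rigidity of the holomorphic Schwarzian. The only point worth stating explicitly in the write-up is that $h$ is locally biholomorphic (so that $Sh$ is defined and Oda's characterization applies), which is guaranteed by membership in $\cp(\Om)$. Accordingly the proof is a one-line deduction, and I would present it as such.

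\begin{proof}
Since $\om(z_0)=0$, applying Theorem~\ref{analytic-Schw} with $\vp\equiv0$ to the hypothesis $S_f\equiv0$ gives $Sh\equiv0$ in $\Om$. As $f\in\cp(\Om)$, the mapping $h$ is locally biholomorphic, so the Schwarzian derivative \eqref{def schw op} is defined for it; recalling that in dimension $n\geq2$ the operators $S^k_{ij}h$ vanish identically only for the M\"obius transformations \eqref{Moebius-n}, we conclude that $h$ is such a M\"obius transformation.
\end{proof}
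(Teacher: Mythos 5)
Your proof is correct and follows exactly the paper's argument: the corollary is stated there as a direct consequence of Theorem~\ref{analytic-Schw} with $\vp\equiv0$ combined with Oda's characterization that the holomorphic Schwarzian \eqref{def schw op} vanishes only for the M\"obius transformations \eqref{Moebius-n}. Your additional remark that $h$ is locally biholomorphic by membership in $\cp(\Om)$ is a harmless (and correct) explicit check of a point the paper leaves implicit.
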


\section{Miscellaneous} \label{sect-4}
%%%%%%%%%%%%%%%%%%%%%%%%%%%%%%%%%%%%%%%%%%%%%%%%%%
In this section we give examples which reveal some interesting differences between harmonic mappings in the plane and pluriharmonic mappings in higher complex dimensions.

%%%%%%%%%%%%%%%%%%%%%%%%%%%%%%%%%%%
\subsection{Dilatation of sense-preserving pluriharmonic mappings} According to Theorem~\ref{DHK-om-sense-preserv}, a pluriharmonic mapping $f=h+\overline{g}$ is sense-preserving at a point $z$ if its dilatation satisfies $\|\om(z)\|<1$. It is natural to ask if, conversely, the condition $J_f>0$ can imply some bound on the norm of the dilatation. We now see that this is not possible. 

\begin{example} Let $h$ be a locally biholomorphic mapping in a domain in $\C^2$ and consider the constant dilatation
\begin{equation*}
\om \, = \, \left(
\begin{array}{cc}
0  & t \\
-1  & 0
\end{array} \right), \qquad t\geq1.   
\end{equation*}
We readily compute $I_2 - \om \overline{\om} = (1+t)I_2$, hence $J_f = |{\rm det}Dh|^2 (1+t)^2 >0$. But $\|\om\| = t$, which can be arbitrarily large. The pluriharmonic mapping $f=h+\overline{g}$ for which $g = (g_1,g_2) = (t h_2, -h_1)$ satisfies the above since, indeed, $Dg=\omega Dh$. 

It is easy to generalize this example to any dimension $n\geq2$. Simply take $\om$ to be the $n\times n$ matrix with only two non-zero entries: $t$ in the upper-right corner and $-1$ in the lower-left corner. 
\end{example}

%%%%%%%%%%%%%%%%%%%%%%%%%%%%%%%%%%%
\subsection{Dilatation of affine transformations} Let $f=h+\overline{g}$ be a pluriharmonic mapping and let $A$ be a matrix with $\|A\|<1$. Then, in view of \eqref{om_F}, the affine transformation $F = f+A \,\overline{f}$ has dilatation $\om_F \, =  \, (\om + \overline{A})(I_n+A\,\om )^{-1}$. We now give a counterexample proving that $\|\om\|<1$ does not imply $\|\om_F\|<1$. Note that the choice of $A$ in this example is arguably the most useful since it ensures that the mapping $F$ satisfies the normalization $\om_F(0)=0$.

%%%%%%%%%%%%%%%%%%%%%%%%%%%%%%%%%%%
\begin{example} \label{counter-omega}
Let $\vp$ be an automorphism of the unit disk $\D=\{z\in\C:|z|<1\}$ given by $\vp(z)=(\al+z)/(1+\al z)$, for $\al\in(0,1)$, and consider the matrix valued mapping 
\begin{equation*}
\om(z,w) \, = \, \frac{\vp(z) }{\sqrt{2}} \left(
\begin{array}{cc}
1 & 1 \\
 0 &  0
\end{array} \right), \qquad  (z,w)\in\D\times\Om_2, 
\end{equation*}
for some domain $\Om_2\subset\C$ containing the origin. We readily compute that $\|\om(z,w)\| = |\vp(z)| <1$. Consider also the matrix 
\begin{equation*}
A \, = \,  - \overline{\om(0,0)} \, = \, - \frac{\al }{\sqrt{2}} \left(
\begin{array}{cc}
1 & 1 \\
 0 &  0
\end{array} \right)
\end{equation*}
and the affine transformation $F = f+A \,\overline{f}$. We compute 
\begin{equation*}
\om + \overline{A} \, = \, \frac{\vp - \al}{\sqrt{2}} \left(
\begin{array}{cc}
1 & 1 \\
 0 &  0
\end{array} \right) 
\end{equation*}
and 
\begin{equation*}	
I_2+A\,\om  \, = \,  \frac{1}{2}\left(
\begin{array}{cc}
2 - \al \vp & -  \al \vp  \\
 0 &  2
\end{array} \right), 
\end{equation*}
whose inverse is given by 
\begin{equation*}	
( I_2+A\,\om )^{-1} \, = \, \frac{1}{ 2- \al \vp} \left(
\begin{array}{cc}
2 & \al \vp \\
 0 &  2 - \al \vp 
\end{array} \right).
\end{equation*}
Therefore, we have that 
\begin{equation*}	
\om_F \, = \, \frac{\sqrt{2} \,(\vp - \al) }{ 2- \al \vp} \left(
\begin{array}{cc}
1 & 1 \\
 0 &  0
\end{array} \right), 
\end{equation*}
whose norm is
$$
\|\om_F(z,w)  \| \, = \,  \frac{2 \, |\vp(z) - \al | }{ |2- \al \vp(z) | } \, = \, \frac{2(1-\al^2)|z|}{| 2-\al^2+\al z | }. 
$$
We compute
$$
\|\om_F\|_\infty \, = \, \sup \{ \|\om_F(z,w)  \|  \, : \, (z,w) \in \D\times\Om_2 \}  \, = \, \frac{2(\al+1)}{\al+2} 
$$
and see that this increases with $\al\in(0,1)$ from 1 to $ 4/3$. 

An obvious modification of this example to any dimension $n\geq2$ would be to consider the dilatation 
$$
\om(z) \, = \, \frac{\vp(z_1) }{\sqrt{n}} \, B, \qquad z_1\in\D, 
$$
with the matrix $B$ having all the entries of its first row equal to 1 and all the remaining entries equal to 0. Then, with the same choice of $\vp$ and $A$, it is not difficult to compute that $\|\om_F\|_\infty=n(\al+1)/(\al+n)$, which increases with $\al\in(0,1)$ from 1 to $ 2n/(n+1)$. 

\end{example}
%%%%%%%%%%%%%%%%%%%%%%%%%%%%%%%%%%%

The following example shows that under only the assumption ${\rm det}( I_n - \om \overline{\om}) > 0$ it is possible that the mapping $\om_F$ is not even well defined. 

%%%%%%%%%%%%%%%%%%%%%%%%%%%%%%%%%%%
\begin{example} \label{counter-det}
Let $t\in(0,1)$ and consider the matrices 
\begin{equation*}	
A \, = \,  \left(
\begin{array}{cc}
t & 0 \\
 0 & -t
\end{array} \right) \qquad \text{and} \qquad \om \, = \,  \left(
\begin{array}{cc}
0 & 1/t^2 \\
 -1 & 0
\end{array} \right), 
\end{equation*}
interpreting $\om$ as the dilatation of the pluriharmonic mapping 
$$
f(z,w) \, =\, \left(z+\frac{\overline{w}}{t^2} \,, \, w-\overline{z} \right),
$$
for example. We easily compute that $\|A\|=t$ and 
\begin{equation*}	
I_n -\om \overline{\om} \, = \,  \left(
\begin{array}{cc}
1+1/t^2 & 0 \\
0 & 1+1/t^2
\end{array} \right), 
\end{equation*}
so that ${\rm det}(I_n -\om \overline{\om} ) = (1+1/t^2)^2 >0$. Since the matrix 
\begin{equation*}	
I_2 +A\om  \, = \,  \left(
\begin{array}{cc}
1 & 1/t \\
t & 1
\end{array} \right)
\end{equation*}
is singular we deduce that the holomorphic part $H$ of the affine transformation $F = f+A \,\overline{f}$ is everywhere singular, that is, it satisfies ${\rm det}DH\equiv 0$, and therefore it is not possible to define the dilatation of $F$.
\end{example}
%%%%%%%%%%%%%%%%%%%%%%%%%%%%%%%%%%%

We note that the proof of Theorem~\ref{factor} shows that the condition ${\rm det}( I_n - \om \overline{\om}) > 0$ together with the condition ${\rm det}(I_n+A\,\om)\neq0$ for every matrix $A$ with $\|A\|<1$ are sufficient for ${\rm det}( I_n - \om_F \overline{\om_F}) > 0$. However, this does not give an answer to Problem~\ref{prob-1} since the latter of these conditions is not preserved under affine transformations. 

%%%%%%%%%%%%%%%%%%%%%%%%%%%%%%%%%%%
\subsection{No shear construction in $\C^n$} A simply connected domain in the plane is called convex in the horizontal direction (CHD) if its intersection with any horizontal line is connected or empty. Let $f=h+\overline{g}$ be a locally univalent planar harmonic mapping (the domain of definition is not relevant here). Then according to Clunie and Sheil-Small's ``shear construction'' introduced in  \cite{CS} (see also \cite[\S3.4]{Du3}), $f$ is univalent and its range is CHD if and only if $h-g$ has the same properties. The key to proving this theorem is the following lemma. 

\begin{lemother}[\cite{CS}]\label{shear-lem}
Let a domain $\Om$ be CHD and let $p$ be a real-valued continuous function on $\Om$. Then the mapping $w\mapsto w+p(w)$ is univalent in $\Om$ if and only if it is locally univalent. If it is univalent then its range is CHD.
\end{lemother}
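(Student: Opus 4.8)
The plan is to exploit the single structural feature of the map. Writing $\Phi(w)=w+p(w)$, the fact that $p$ is real-valued gives ${\rm Im}\,\Phi(w)={\rm Im}\,w$, so $\Phi$ carries each horizontal line into itself. Fix $c\in\R$ and put $I_c=\Om\cap\{w:{\rm Im}\,w=c\}$; since $\Om$ is CHD, $I_c$ is an interval (possibly empty), and on $I_c$ the map $\Phi$ is described by the continuous real-variable function $\psi_c(x)=x+p(x+ic)$, namely $\Phi(x+ic)=\psi_c(x)+ic$.

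The first step is to record the reduction to one variable. Since $\Phi(w_1)=\Phi(w_2)$ forces ${\rm Im}\,w_1={\rm Im}\,w_2$, the mapping $\Phi$ is univalent on $\Om$ if and only if $\psi_c$ is injective on $I_c$ for every $c$. Moreover, if $\Phi$ is locally univalent and is injective on a ball $B(x_0+ic,r)\cap\Om$, then $\psi_c$ is injective on the neighborhood $(x_0-r,x_0+r)\cap I_c$ of $x_0$ in $I_c$; hence local univalence of $\Phi$ implies that each $\psi_c$ is locally injective on $I_c$. The key analytic input is then the classical one-variable fact that a continuous function on an interval that is locally injective is strictly monotone, and therefore injective. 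Putting these together: if $\Phi$ is locally univalent, then each $\psi_c$ is locally injective, hence injective, hence $\Phi$ is univalent; the reverse implication is immediate. This establishes the stated equivalence.

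For the assertion about the range, suppose $\Phi$ is univalent. Being a continuous injective map on the open set $\Om\subset\C=\R^2$, $\Phi$ is, by Brouwer's invariance of domain, a homeomorphism onto $\Phi(\Om)$; in particular $\Phi(\Om)$ is a simply connected domain. For each $c$ we have $\Phi(\Om)\cap\{w:{\rm Im}\,w=c\}=\Phi(I_c)=\{\psi_c(x)+ic:x\in I_c\}$, which is the continuous image of the interval $I_c$ and hence connected or empty. Therefore $\Phi(\Om)$ is CHD.

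The step demanding the most care is the one-dimensional lemma (for continuous $\psi$ on an interval, local injectivity implies strict monotonicity): one supposes $\psi(a)=\psi(b)$ with $a<b$, uses the fact that $\psi$ attains its maximum (or minimum) on $[a,b]$ at an interior point, and checks that $\psi$ fails to be injective on every neighborhood of a suitably chosen such point --- a short but slightly delicate argument that could also simply be cited. A minor point is that, because $p$ is only assumed continuous, the invariance-of-domain argument is what guarantees that $\Phi(\Om)$ is open, hence a domain; this would be automatic were $p$ of class $C^1$ with $\Phi$ locally univalent.
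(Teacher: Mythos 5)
The paper states this result as Lemma~\ref{shear-lem} and simply cites it to Clunie and Sheil-Small \cite{CS} (see also \cite[\S3.4]{Du3}); it gives no proof of its own, so there is nothing internal to compare against. Your argument is correct and is essentially the classical proof: since $p$ is real-valued the map preserves each horizontal line, the CHD hypothesis reduces everything to a continuous function $\psi_c$ on an interval, and the one-variable fact that a continuous locally injective function on an interval is strictly monotone does the rest, with invariance of domain supplying openness of the image. The two points you flag as delicate (the monotonicity lemma and the need for invariance of domain because $p$ is merely continuous) are exactly the right ones, and both are handled adequately.
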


To generalize the shear construction in several complex variables we would have first to give a suitable analogue of the concept of directional convexity for a domain $\Om$ in $\C^n$ and then to generalize Lemma~\ref{shear-lem} to any continuous $p:\Om \to\R^n$. However, we now give an example of a convex domain in $\C^2$ and a function $p$ for which $w\mapsto w+p(w)$ is locally univalent but not univalent, thus excluding the possibility of such a generalization. 

\begin{example}
Consider the domain $\Om=\Om_1\times\Om_2 \subset\C^2$, where 
$$
\Om_1 \, = \,  \{w\in\C \, : \, -1 < {\rm Re}\,w  < 1\} 
$$
and
$$
\Om_2 \, = \,  \{w\in\C \, : \, -(\pi+\ve) < {\rm Re}\,w < \pi+\ve\}, 
$$
for some $\ve>0$. With the notation $w=(w_1,w_2)$ and $w_k=x_k+iy_k, \, k=1,2$, let
$$
p(w) \, = \,  e^{x_1} (\cos x_2 , \sin x_2) - (x_1 , x_2). 
$$
Setting $q(w)=w+p(w)$ we may interpret $q$ as a mapping in $\R^4$ by writing 
$$
q(x_1,y_1,x_2,y_2)  \, = \, (e^{x_1} \cos x_2 , y_1 , e^{x_1} \sin x_2 , y_2),
$$ 
and compute its Jacobian as $J_q(w)=e^{2x_1}>0$. Therefore the mapping $q$ is locally univalent. But, we note that
$$
q(0, -\pi) \, = \, (-1,0) \, = \, q(0,\pi), 
$$
which shows that $q$ is not univalent in $\Om$.

A simple modification of the function $p$, for example, by setting zero in the remaining entries, serves as a counterexample in any dimension $n\geq2$. 
\end{example}

\vskip.3cm
%%%%%%%%%%%%%%%%%%%%%%%%%%%%%%%%%%%%%
%%%%%%%%%%%%%%%%%%%%%%%%%%%%%%%%%%%%%
\emph{Acknowledgements}. We wish to thank professor Martin Chuaqui for many fruitful discussions and, in particular, for an observation which significantly reduced the length of the proof of Theorem 5.

% We wish to thank professor Martin Chuaqui for fruitful discussions. 

%%%%%%%%%%%%%%%%%%%%%%%%%%%%%%%%%%%
%%%%%%%%%%  Bibliography  %%%%%%%%%%%%%%%%%%

%%%%%%%%%%%%%%%%%%%%%%%%%%%%%%%%%%%
%%%%%%%%%%%%%%%%%%%%%%%%%%%%%%%%%%%
\end{document}